\newcommand{\blowup}[2]{#1#2}
\title{On the Biplanarity of Blowups}
\author{David Eppstein}
\institute{Computer Science Department, University of California, Irvine}
\date{ }
\begin{document}
\thispagestyle{empty}
\maketitle  

\begin{abstract}
The 2-blowup of a graph is obtained by replacing each vertex with two non-adjacent copies; a graph is biplanar if it is the union of two planar graphs. We disprove a conjecture of Gethner that 2-blowups of planar graphs are biplanar: iterated Kleetopes are counterexamples. Additionally, we construct biplanar drawings of 2-blowups of planar graphs whose duals have two-path induced path partitions, and drawings with split thickness two of 2-blowups of 3-chromatic planar graphs, and of graphs that can be decomposed into a Hamiltonian path and a dual Hamiltonian path.

\renewcommand\and{$\cdot$ }
\keywords{Graph thickness \and Split thickness \and Graph blowups \and Kleetopes.}
\end{abstract}

\section{Introduction}

In a 2018 survey on the Earth--Moon problem, Ellen Gethner conjectured that 2-blowups of planar graphs are always biplanar~\cite{Get-GT2-18}. In this paper we refute this conjecture by showing that 2-blowups of iterated Kleetopes are non-biplanar, and more strongly do not have split thickness two.

\subsection{Definitions and preliminaries}
Before detailing our results, let us unpack this terminology: what are Kleetopes, biplanarity and split thickness, and blowups?

\smallskip\paragraph{Polyhedral graphs} are the graphs of convex polyhedra. By Steinitz's theorem, these are exactly the 3-vertex-connected planar graphs~\cite{Ste-EMW-22}. Polyhedral graphs have unique planar embeddings~\cite{Mac-DMJ-37}, whose faces are exactly the \emph{peripheral cycles}, cycles such that every two edges not in the cycle are part of a path with interior vertices disjoint from the cycle~\cite{Tut-PLMS-63}. Every \emph{maximal planar graph} with $\ge 4$ vertices, one to which no edges can be added while preserving planarity, is polyhedral.

The \emph{Kleetope} of a polyhedral graph (named by Branko Grünbaum for Victor Klee~\cite{Gru-IJM-63}) is a maximal planar graph obtained by adding a new vertex within every face, adjacent to all the vertices of the face. Geometrically, it can be formed by attaching a pyramid to every face, simultaneously. An \emph{iterated Kleetope} is the result of repeatedly applying this operation a given number of times.
Following notation from our previous work~\cite{Epp-GCOM-21}, we
let $KG$ denote the Kleetope of a graph $G$ and $K^iG$ denote the result of applying the Kleetope operation $i$ times to~$G$.

\paragraph{Thickness} is the minimum number of planar subgraphs needed to cover all edges of a given graph. Equivalently, it is the minimum number of edge colors needed to draw the graph in the plane with colored edges so each crossing has edges of two different colors. A graph is \emph{biplanar} if its thickness is at most two. Thus, a biplanar drawing of a graph can be interpreted as a pair of planar drawings of two subgraphs of the given graph that, together, include all of the graph edges.
Repeated edges are never necessary and for technical reasons we forbid them.

Little was known about the structure of biplanar graphs. They include all graphs of maximum degree four~\cite{Hal-IS-91,DunEppKob-SoCG-04}, and therefore cannot have more structure than degree-four graphs. An NP-completeness reduction for biplanarity by Mansfield~\cite{Man-MPCPS-83} can be used to construct infinitely many non-biplanar graphs. Additionally, as S{\'y}kora et al. observed, 5-regular graphs of girth $\ge 10$ are too dense for their girth to be biplanar~\cite{SykSzeVrt-IS-04}.
 
\emph{Split thickness} is a generalization of thickness in which we form a single planar drawing with multiple copies of each vertex, which are not required to be near each other in the drawing. Each edge of the graph appears once, connecting an arbitrary pair of copies of its endpoints. A drawing has split thickness $k$ if each vertex has at most $k$ copies, and the split thickness of a graph $G$ is the minimum number $k$ such that $G$ has a drawing with split thickness~$k$~\cite{EppKinKob-Algo-18}. Split thickness is less than or equal to thickness, but they can diverge, even for complete graphs: $K_{12}$ has split thickness two~\cite{Hea-QJM-90} but $K_9$ already has thickness three~\cite{BatHarKod-BAMS-62,Tut-CMB-63}. Thickness has its origin in the Earth--Moon problem, posed by Gerhard Ringel in 1959~\cite{Rin-59}, which in graph-theoretic terms asks for the maximum chromatic number of biplanar graphs. In the same way, split thickness corresponds to the older $m$-pire coloring problem~\cite{Gar-SA-80}.

\paragraph{Blowups} of graphs are formed by duplicating their vertices a given number of times. More specifically, the (open) $k$-blowup of a graph $G$, which we denote as $\blowup{k}{G}$,\footnote{Blowups are a standard concept but their notation varies significantly. Other choices from the literature include $G(k)$, $G[k]$, $G^k$, and $G^{(k)}$.} is obtained by making $k$ copies of each vertex of $G$, and by connecting two vertices in $\blowup{k}{G}$ whenever they are copies of adjacent vertices in $G$. Two copies of the same vertex are not adjacent. In a \emph{closed blowup}, the copies are adjacent.

\begin{figure}[t]
\centering\includegraphics[width=\textwidth]{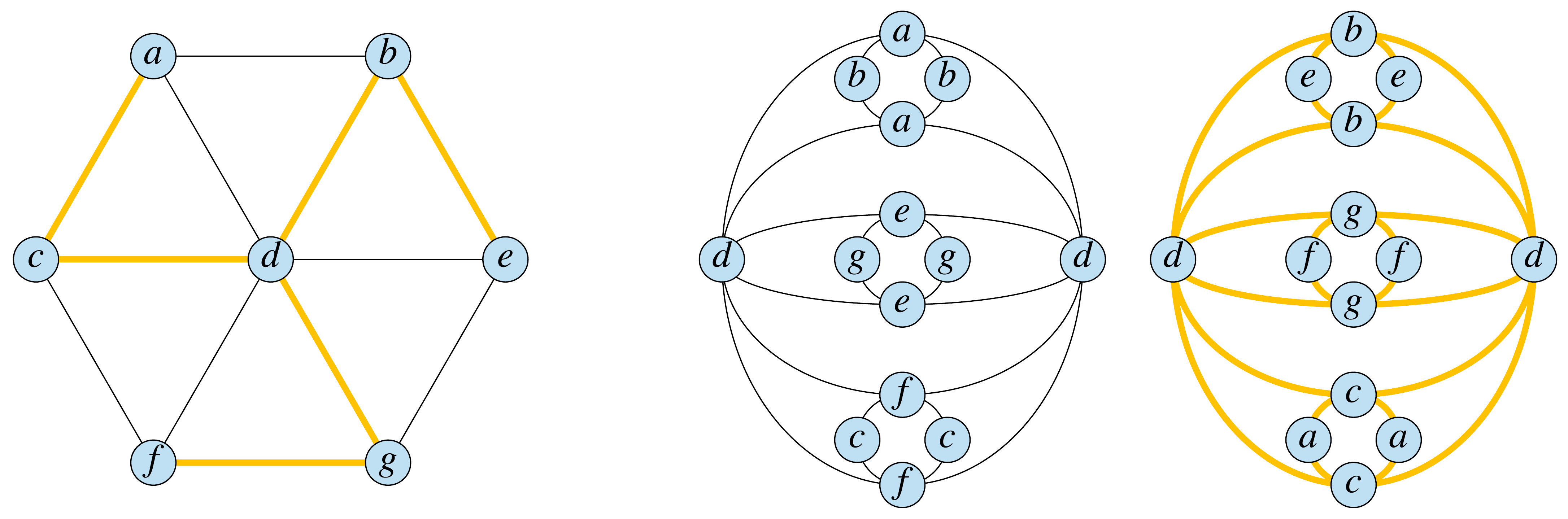}
\caption{Decomposition of a seven-vertex wheel graph into two trees (left) and the corresponding biplanar drawing of its 2-blowup (right)}
\label{fig:thick-arbor}
\end{figure}

Albertson, Boutin, and Gethner~\cite{AlbBouGet-DM-10} proved that the closed 2-blowup of any tree or forest is planar and therefore that the closed 2-blowup of any graph of arboricity $a$ has thickness at most $a$. Here, \emph{arboricity} is the minimum number of forests that cover all edges of a graph. Adding a leaf to a forest corresponds, in the closed 2-blowup, to gluing a $K_4$ subgraph onto an edge, which preserves planarity, and the $a$ forests that cover a graph of arboricity $a$ can be blown up by induction in this way, separately from each other (\cref{fig:thick-arbor}).

Planar graphs have arboricity at most three~\cite{Nas-JLMS-61}, and this is tight for planar graphs with more than $2n-2$ edges, including most maximal planar graphs. Therefore, their 2-blowups have thickness at most three. Gethner's conjecture asks whether there are planar graphs for which the resulting thickness-three drawing is optimal or whether smaller thickness, two, can always be achieved.

\subsection{New results}
Our main result is that for all sufficiently large maximal planar graphs $G$, $\blowup{2}{K^3G}$ does not have thickness two and does not have split thickness two. This gives a proof of non-biplanarity for a natural and sparse class of graphs that (unlike previous methods for proving non-biplanarity) allows short cycles.

To complement this result, we provide biplanar or split thickness two drawings for the blowups of three natural classes of planar graphs:
\begin{itemize}
\item We construct a biplanar drawing of the 2-blowup of any planar graph whose faces can be decomposed into two \emph{outerpaths}, strips of polygons connected edge-to-edge with the topology of a path (see \cref{def:outerpath}). Equivalently, this structure is a partition of the dual graph into two induced paths. 
\item When $G$ can be decomposed into two outerpaths, we construct a split thickness two drawing of its Kleetope $KG$. In the special case of the tetrahedral graph $K_4$, this construction can be used for the iterated Kleetope $K^2 K_4$.
\item When $G$ and its dual have disjoint Hamiltonian paths, we construct a split thickness two drawing of the 2-blowup of $G$.
\item We construct a split thickness two drawing of the 2-blowup of any 3-chromatic planar graph, and more generally a drawing with split thickness $k$ of the $k$-blowup of these graphs.
\end{itemize}
 
These drawing algorithms motivate our use of iterated Kleetopes in constructing non-biplanar blowups of planar graphs, because Kleetopes are far from having the properties needed to make these algorithms work. Kleetopes of maximal planar graphs are far from 3-chromatic: each added vertex forms a $K_4$ subgraph, an obstacle to 3-coloring. And iterated Kleetopes are far from being decomposable into outerpaths, as their dual graphs have no long induced paths. The underlying planar graphs for each of our drawing algorithms include infinitely many maximal planar graphs, showing that it is not merely the large size and maximality of iterated Kleetopes that prevents their blowups from having drawings. Additionally, because triangle-free planar graphs are 3-chromatic~\cite{Gro-WZMLU-59}, these constructions suggest that the connection of S{\'y}kora et al.  between girth and non-biplanarity is unlikely to help construct planar graphs with non-biplanar blowups.

\section{Iterated Kleetopes}

In this section we show that some 2-blowups of iterated Kleetopes are not biplanar and do not have split thickness two or less.
As in our previous work on the geometric realization of iterated Kleetopes~\cite{Epp-GCOM-21}, our
approach uses the observation that any realization or drawing of an iterated Kleetope must be based on a realization or drawing of a graph with one fewer iteration. This simpler drawing can be recovered from the final drawing by removing the vertices added in the Kleetope process. Using this observation,
we build up a sequence of stronger properties for the biplanar and split thickness two embeddings of these graphs, as the number of Kleetope iterations increases. Eventually, these properties will become so strong that they lead to an impossibility.

\begin{definition}
For a vertex $v$ of graph $G$, it is convenient to denote the two copies of $v$ in $\blowup{2}{G}$ by $v_0$ and $v_1$. We distinguish these from the two \emph{images} of $v_0$ and the two images of $v_1$ in a biplanar or split thickness two drawing of $\blowup{2}{G}$. In such a drawing, $v$ itself has four images, two from $v_0$ and two from~$v_1$.
\end{definition}

We need the following definitions in the proof of our first lemma.

\begin{definition}
Define the \emph{excess} of a face in a planar, biplanar, or split thickness two drawing of a graph to be the number of edges in the face, minus three, so triangles have excess zero and all other faces have positive excess. Define the total excess of the drawing to be the sum of all face excesses. 
\end{definition}

The total excess of a drawing equals the amount by which the number of edges in the graph falls short of the maximum possible number of edges in a drawing of its type, and so can be calculated only from a graph and the type of its drawing, independent of how it is drawn:
\begin{itemize}
\item A planar drawing of an $n$-vertex graph can have at most $3n-6$ edges, and if there are $m$ edges then the total excess is $(3n-6)-m$.
\item A biplanar drawing of an $n$-vertex graph can have at most $6n-12$ edges, and if there are $m$ edges then the total excess is $(6n-12)-m$.
\item A split thickness two drawing of an $n$-vertex graph can have at most $6n-6$ edges, and if there are $m$ edges then the total excess is $(6n-6)-m$.
\end{itemize}

\begin{lemma}
\label{lem:triangulated}
Let $G$ be a maximal planar graph with $n$ vertices. If $n\ge 49$, then in any biplanar drawing $D$ of $\blowup{2}{G}$ some vertex $v$ of $G$ has images that are only incident to triangles. If $n\ge 73$, then for any split thickness two drawing some vertex $v$ has the same property. We say that $v$ has \emph{triangulated neighborhoods}.
\end{lemma}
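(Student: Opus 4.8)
The plan is to argue by contradiction, via a two-sided estimate of the number of incidences between vertex images and non-triangular faces, resting on the total-excess bookkeeping already set up. First I would record the parameters: a maximal planar graph $G$ on $n\ge 4$ vertices has $3n-6$ edges, so $\blowup{2}{G}$ has $2n$ vertices and, since each edge of $G$ gives rise to four edges of the blowup, $4(3n-6)=12n-24$ edges. Plugging these into the excess formulas, a biplanar drawing of $\blowup{2}{G}$ has total excess exactly $(6\cdot 2n-12)-(12n-24)=12$, while a split thickness two drawing has total excess exactly $(6\cdot 2n-6)-(12n-24)=18$; write $X$ for whichever is relevant. Call a face \emph{non-triangular} if it has positive excess, and call an image of a blowup vertex \emph{bad} if it lies on some non-triangular face. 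Suppose, for contradiction, that no vertex of $G$ has triangulated neighborhoods; then every vertex $v$ of $G$ has a bad image.

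The main work is a two-sided bound on the number of incident pairs (bad image, non-triangular face). From below: for each of the $n$ vertices $v$ of $G$, choose one bad image $\tilde v$ and one non-triangular face incident to it; since $v\ne w$ implies $\{v_0,v_1\}\cap\{w_0,w_1\}=\emptyset$, distinct vertices $v$ yield distinct images $\tilde v$, so these $n$ pairs are distinct and there are at least $n$ incident pairs. From above: a non-triangular face of excess $e$ has a boundary walk of length $e+3$ and hence meets at most $e+3$ distinct images; summing over the non-triangular faces $f$, with excesses $e(f)$ and total count $F_b$, the number of incident pairs is at most $\sum_f \bigl(e(f)+3\bigr)=X+3F_b$. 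Because each non-triangular face has excess at least $1$, we have $F_b\le X$, and therefore the number of incident pairs is at most $4X$.

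Combining the two bounds gives $n\le 4X$. In the biplanar case $X=12$, so $n\le 48$, contradicting $n\ge 49$; in the split thickness two case $X=18$, so $n\le 72$, contradicting $n\ge 73$. Either way, some vertex of $G$ must have triangulated neighborhoods.

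I do not expect a deep obstacle here: the proof is essentially the excess bookkeeping (already provided in the text) plus a clean double count, and the constants $49$ and $73$ are exactly $4X+1$. The one point needing care is the treatment of degenerate face boundaries---a boundary walk that is not a simple cycle, or an image left isolated within one of the two planar layers (or among the copies, in the split thickness case)---so that ``at most $e+3$ distinct images per face'' and the distinctness invoked in the lower bound refer to the same incidence relation, keeping the comparison $n\le 4X$ legitimate.
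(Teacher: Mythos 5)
Your proof is correct and follows essentially the same route as the paper: compute the total excess ($12$ for biplanar, $18$ for split thickness two) and bound by $4X$ the number of vertices that can touch a non-triangular face, forcing some vertex of $G$ to have triangulated neighborhoods once $n\ge 4X+1$. Your double count over (image, non-triangular face) incidences is just a slightly more formal rendering of the paper's observation that the bound is maximized by disjoint quadrilaterals, with identical constants.
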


\begin{proof}
The number of vertices that belong to non-triangular faces in a planar, biplanar, or split thickness two drawing is maximized when the the total excess is distributed among disjoint quadrilateral faces. In this case, the number of such vertices is four times the excess. Any other distribution of the total excess, or non-disjointness among the non-triangular faces, produces fewer such vertices.

Because $G$ is maximal planar, it has excess zero and $3n-6$ edges. Its blowup $\blowup{2}{G}$ has $2n$ vertices and $12n-24$ edges, four copies of each edge in $G$. Therefore, any biplanar drawing of $G$ has excess $12$, and any split thickness two drawing of $G$ has excess $18$. The number of vertices that can belong to a non-triangular face in these drawings is, respectively, $48$ and $72$. For larger values of $n$, some vertex has triangulated neighborhoods.
\end{proof}

\begin{figure}[t]
\centering\includegraphics[scale=0.25]{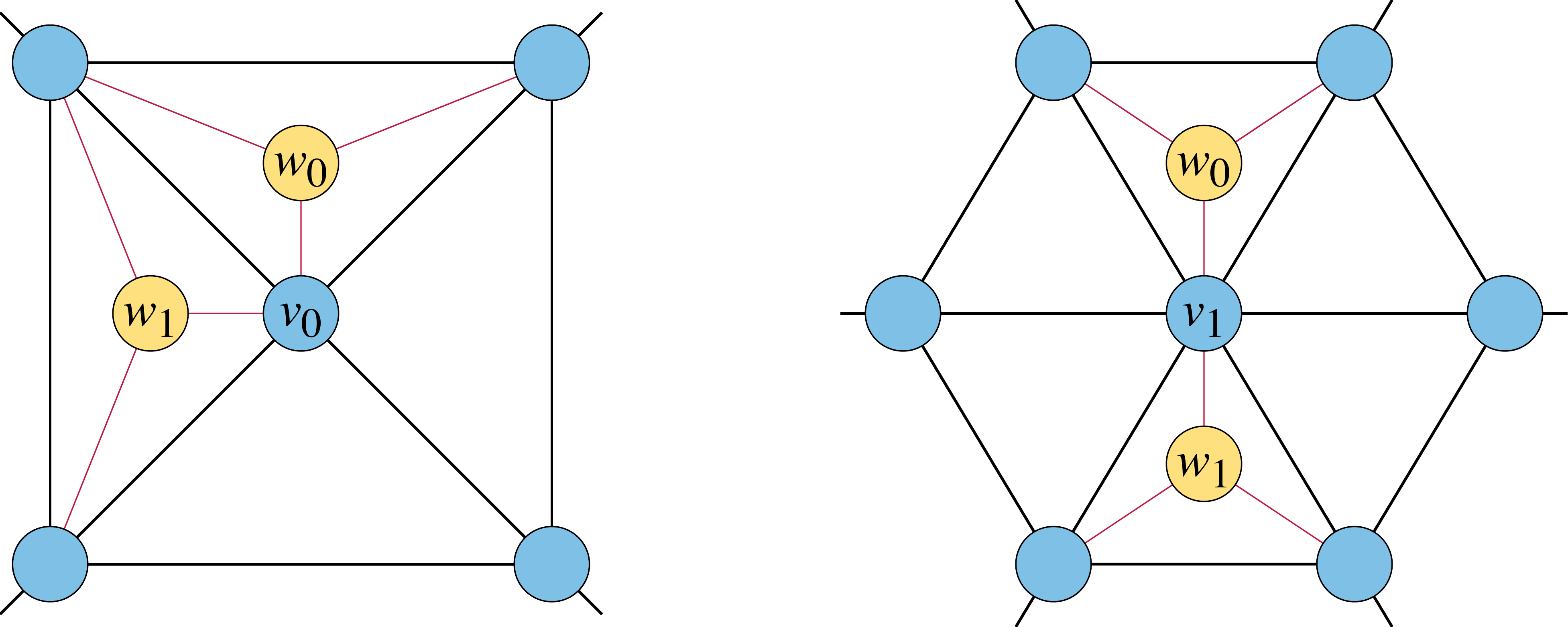}
\caption{Left: Illustration for \cref{lem:triangular}: If $v$ in $G$ has triangulated neighborhoods, and $w$ is any neighbor of $v$ added in $KG$, then the images of $w$ must lie in two triangles incident to images of $v$, connected to all six triangle vertices. In this example, the four images of $w$ are neighbors of only two images of $v$, but they may instead be neighbors of three or four images of $v$.}
\label{fig:triangulate}
\end{figure}

\begin{lemma}
\label{lem:triangular}
Let $G$ be a maximal planar graph, and consider any biplanar drawing or split thickness two drawing $D$ of $\blowup{2}{KG}$,
and the restriction of the same drawing to $G$.
If some vertex $v$ of $G$ has triangulated neighborhoods in the restriction to $G$, then any neighbor $w$ of $v$ in $KG\setminus G$ has its four images each drawn surrounded by exactly three triangular faces. We say that $w$ has \emph{triangular neighborhoods}.
If $G$ has $n$ vertices with $n\ge 49$, then in any biplanar drawing $D$ of $\blowup{2}{KG}$ some vertex $w$ of $KG$ has triangular neighborhoods. If $n\ge 73$, then for any split thickness two drawing of $\blowup{2}{KG}$ some vertex $w$ has triangular neighborhoods.
\end{lemma}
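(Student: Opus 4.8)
The two existence assertions follow at once from the structural assertion together with \cref{lem:triangulated}. Indeed, the restriction $D'$ of $D$ to the copies of the vertices of $G$ is itself a biplanar (respectively, split thickness two) drawing of $\blowup{2}{G}$, so for $n\ge 49$ (respectively $n\ge 73$) \cref{lem:triangulated} supplies a vertex $v$ of $G$ with triangulated neighborhoods in $D'$; since $G$ is maximal planar, $v$ lies on a face, and the vertex inserted into that face in forming $KG$ is a neighbor $w$ of $v$ in $KG\setminus G$, to which the structural assertion applies. So the work is the structural assertion: fix $v$ with triangulated neighborhoods in $D'$ and a neighbor $w$ of $v$ in $KG\setminus G$. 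Since $G$ is maximal planar, $w$ was inserted into a triangular face $uvx$ of $G$ and has exactly the neighbors $u,v,x$ in $KG$; hence each of $w_0,w_1$ has degree six in $\blowup{2}{KG}$, and the images of $w$ carry $6+6=12$ edge endpoints altogether.

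First I would pin down where the images of $w$ lie. The edges $w_0v_0$ and $w_0v_1$ cannot be incident to a common image of $w_0$: working inside the planar sheet that contains such an image (one of the two sheets in the biplanar case, the single sheet otherwise), adjacency to an image of $v_0$ forces this image of $w_0$ into a face incident to that image of $v_0$, which is a triangle because $v$ has triangulated neighborhoods, and then the image of $v_1$ would have to be another corner of that same triangle, producing an edge between $v_0$ and $v_1$ -- impossible. So $w_0$ has two images, one incident to $w_0v_0$ and one to $w_0v_1$, each lying inside a triangular face of $D'$ one of whose corners is an image of a copy of $v$; the same holds for $w_1$. An image of $w_0$ inside a triangular face $T$ of $D'$ can be joined only to the three corners of $T$ (its neighbors are copies of vertices of $G$, none interior to a face of $D'$, and $w_0$ has no neighbor among the pairwise-nonadjacent inserted vertices), so every image of $w$ has degree at most three; with the budget of $12$ each has degree exactly three and is joined to all three corners of its triangle, which are therefore copies of $u$, $v$, and $x$.

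It remains to show the three faces at each image of $w$ are triangles. Let $\tilde w$ be an image of $w$, drawn inside $T=\triangle abc$ of $D'$ with $a,b,c$ copies of $u,v,x$ and $\tilde w$ joined to $a,b,c$; the three edges at $\tilde w$ cut $T$ into three regions, and I must rule out a further vertex inside one of them. Such a vertex is a copy of an inserted vertex (the interior of $T$ is empty in $D'$), is not adjacent to $\tilde w$, and hence is joined only to the corners of $T$ bounding its region -- at most two of them -- so this image has degree at most two. The plan is to contradict the fixed total excess of $D$ (exactly $12$ for a biplanar, exactly $18$ for a split thickness two drawing of $\blowup{2}{KG}$): a degree-$\le 2$ image of an inserted vertex is incident to a non-triangular face, and because an inserted vertex has degree six in $\blowup{2}{KG}$ its complementary image has degree $\ge 4$ and therefore lies in a face of $D'$ with $\ge 4$ corners; moreover a subdividing vertex touching the copy of $v$ would have to be a copy of a Kleetope-neighbor of $v$, whose images all have degree three by the previous step, so it cannot occur, forcing the subdividing vertex to be a copy of the Kleetope vertex across the edge $ux$ -- which in turn forces the endpoints of $ux$ to fail to have triangulated neighborhoods. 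One then tallies these forced non-triangular faces against the available excess.

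The main obstacle is exactly this last accounting. The total excess is a constant while there are linearly many candidate vertices $w$, so a plain head-count against the excess bound does not suffice; the argument must instead show that a single subdivided region at one image of $w$ already forces a cascade of non-triangular faces and deficient vertices large enough to exceed $12$ (or $18$). I expect this to require combining the bijection between the triangular faces of $D'$ around the images of $v$ and the images of the Kleetope-neighbors of $v$ (each joined to all three corners of its host triangle, leaving no room for a second "full" copy) with the structure of the complementary images of the low-degree copies, and it is the step where the proof is genuinely delicate rather than bookkeeping.
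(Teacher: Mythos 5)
Your first two paragraphs reproduce the paper's entire proof. The paper argues exactly as you do: the existence claims follow by applying \cref{lem:triangulated} to the restriction of $D$ to $\blowup{2}{G}$; and for the structural claim, since no face of the restricted drawing contains both $v_0$ and $v_1$ (they are non-adjacent and $v$ has triangulated neighborhoods), the two images of $w_0$ must lie in two triangular faces of the restricted drawing containing $v_0$ and $v_1$ respectively, an image inside such a triangle can only be joined to the three corners of that triangle, and the degree count ($w_0$ and $w_1$ each have degree six) then forces every image of $w$ to be joined to all three corners of its host triangle. The paper stops at this point and reads the conclusion of the lemma directly off this forced configuration; it does not carry out any further argument.

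The problem with your submission is the step you yourself flag and leave open. You correctly observe that the literal wording ``surrounded by exactly three triangular faces'' additionally requires that no image of another inserted vertex be drawn inside one of the three regions at an image of $w$, you correctly reduce the possible interloper to a low-degree image of the apex of the face across the third edge (copies of apexes adjacent to $v$ are excluded by the same forced-adjacency argument), and you correctly note that its sibling image would then need at least four of its neighbors on a single non-triangular face of the restricted drawing. But you then state that the resulting accounting against the fixed excess is beyond a head-count and do not complete it, so as written your proof of the stated conclusion is unfinished at exactly that point. For comparison, the paper performs no such accounting: it treats the triangular-neighborhood conclusion as immediate from the forced placement and adjacencies, and this strong form is what is invoked again at the next Kleetope level in \cref{thm:kleetope}. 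So you have covered everything the paper actually argues and have in addition isolated a subtlety the paper leaves implicit; but a complete write-up would have to either close that remaining case or show that the forced placement-and-adjacency property alone suffices for the later applications.
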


\begin{proof}
As an added vertex in $KG$, $w$ has degree three, so its copy $w_0$ in $\blowup{2}{KG}$ has degree six. To be adjacent to both $v_0$ and $v_1$, the two images of $w_0$ must lie in two triangular faces of the restricted drawing containing $v_0$ and $v_1$, as shown in \cref{fig:triangulate}. (No face contains both $v_0$ and $v_1$, because they have triangular neighborhoods and are not adjacent.)  This placement limits $w_0$ to having as neighbors only the six vertices of these two triangles, matching its degree, so it must be connected to all six of these vertices. The same argument applies to $w_1$.

The existence of $w$ in biplanar or split thickness drawings of $\blowup{2}{KG}$ for graphs with many vertices follows by applying this argument to the vertex $v$ with triangulated neighborhoods given by \cref{lem:triangulated}.
\end{proof}

In \cref{lem:triangular}, the two triangular neighborhoods of $w_0$ must be disjoint, so they cover all six distinct neighbors of $w_0$ in $\blowup{2}{KG}$. Similarly, the two neighborhoods of $w_1$ must be disjoint. However, a neighborhood of $w_0$ may share a vertex or an edge with a neighborhood of $w_1$. (It cannot share edges with two neighborhoods because then those two neighborhoods would not be disjoint.) In fact, some sharing is necessary:

\begin{lemma}
\label{lem:no-edge-disjoint}
Let $t$ be a vertex of a planar graph $H$ having three neighbors, all adjacent. Suppose that $t$ has triangular neighborhoods in a biplanar or split thickness two drawing of $\blowup{2}{H}$. Then it is impossible for all four images of $t$ to have edge-disjoint neighborhoods in this drawing.
\end{lemma}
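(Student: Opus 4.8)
The plan is to reduce the claim to a one‑line fact about Hamming distances in the Boolean cube $\{0,1\}^3$. Write $a,b,c$ for the three neighbors of $t$ in $H$; by hypothesis they are pairwise adjacent, so in $\blowup{2}{H}$ each of $t_0$ and $t_1$ has precisely the six neighbors $a_0,a_1,b_0,b_1,c_0,c_1$. Since $t$ has triangular neighborhoods, each of its four images is surrounded by exactly three triangular faces and so has exactly three incident edges; hence the six edges at $t_0$ are split three‑and‑three between its two images, and likewise for $t_1$. The three neighbors of a given image of $t_0$ are pairwise adjacent (they bound the three triangular faces around that image), so they cannot include both copies of any of $a,b,c$, which are non‑adjacent; thus they consist of exactly one copy of each. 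Consequently the two images of $t_0$ have complementary neighbor triples — if one has neighbors $a_\alpha,b_\beta,c_\gamma$ then the other has $a_{1-\alpha},b_{1-\beta},c_{1-\gamma}$ — and the same holds for $t_1$. Encoding the triple $a_\alpha,b_\beta,c_\gamma$ by its \emph{type} $(\alpha,\beta,\gamma)\in\{0,1\}^3$, the four images of $t$ have neighborhoods (triangles) of types $\tau,\overline{\tau},\rho,\overline{\rho}$ for some $\tau,\rho\in\{0,1\}^3$, where the overline denotes componentwise complement.

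Next I would observe that the three edges of the triangle of type $\sigma$ are exactly $a_{\sigma_a}b_{\sigma_b}$, $b_{\sigma_b}c_{\sigma_c}$ and $c_{\sigma_c}a_{\sigma_a}$, one for each unordered pair from $\{a,b,c\}$. Hence the triangles of types $\sigma$ and $\sigma'$ have a common edge if and only if $\sigma$ and $\sigma'$ agree in at least two of the three coordinates, that is, their Hamming distance $d(\sigma,\sigma')$ is at most $1$. So if all four images of $t$ had pairwise edge‑disjoint neighborhoods, the four types $\tau,\overline{\tau},\rho,\overline{\rho}$ would be pairwise at Hamming distance at least $2$. But in every coordinate $\tau$ agrees with exactly one of $\rho$ and $\overline{\rho}$, so $d(\tau,\rho)+d(\tau,\overline{\rho})=3$, whence $\min\{d(\tau,\rho),d(\tau,\overline{\rho})\}\le 1$ and the corresponding pair of triangles shares an edge — a contradiction. (If two of the four types happen to coincide, those two triangles share all three edges, so the conclusion only becomes easier; no case analysis is needed.)

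I do not expect a genuine obstacle: the content is entirely the bookkeeping above — attaching a point of $\{0,1\}^3$ to each triangular neighborhood, observing that the two neighborhoods coming from a single copy $t_i$ are antipodal, and the identity $d(\tau,\rho)+d(\tau,\overline{\rho})=3$. It is worth noting that planarity of $H$ and the geometry of the host drawing play no role beyond making the phrase ``triangular neighborhoods'' meaningful; the statement is really about the structure of the 2‑blowup. The one point that needs a word of care is the justification that each image of $t_i$ has three distinct neighbors forming the boundary triangle of its three surrounding faces, so that its type is well defined — and this is immediate from the definition of triangular neighborhoods in \cref{lem:triangular} together with the non‑adjacency of the two copies of each vertex.
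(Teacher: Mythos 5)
Your proof is correct. It rests on the same reduction as the paper's---everything takes place inside $\blowup{2}{\Delta}\cong K_{2,2,2}$, where $\Delta$ is the triangle of neighbors of $t$---but the contradiction is reached along a genuinely different route. The paper assumes the four neighborhoods are pairwise edge-disjoint, invokes the structure of any four pairwise edge-disjoint triangles of the octahedron (each two of them share a vertex, as illustrated in its figure), and then contradicts the requirement that the two neighborhoods of $t_0$ cover all six of its neighbors. You instead use that coverage requirement \emph{first}, to deduce that the two neighborhoods of $t_0$ have complementary types in $\{0,1\}^3$ (and likewise for $t_1$), and then the identity $d(\tau,\rho)+d(\tau,\overline{\rho})=3$ forces some cross pair of types to agree in at least two coordinates and hence to share an edge. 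What your ordering buys is self-containment: the paper's step about four edge-disjoint triangles is a small classification delegated to a picture, whereas your Hamming-distance identity needs no case analysis and absorbs the degenerate case of coinciding types for free. The points that deserve to stay explicit, and which you essentially do address, are that the no-repeated-edges convention is what lets you pass between edge-disjointness of the drawn neighborhoods and edge-disjointness of the corresponding triangles of $K_{2,2,2}$ (each graph edge has a single drawn image, so sharing a graph edge means sharing a drawn edge), and that the triangle surrounding an image of $t_i$ cannot contain two images of copies of the same vertex of $H$, so its type is well defined.
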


\begin{proof}
Let $\Delta$ be the triangle of neighbors of $t$ in $H$.  $\blowup{2}{\Delta}$ is isomorphic to $K_{2,2,2}$, the graph of a regular octahedron. We are assuming our drawings have no repeated edges, so two images of $t$ with edge-disjoint neighborhoods in the drawing must come from edge-disjoint triangles of  $\blowup{2}{\Delta}$. Thus, if the four images of $t$ could be drawn with edge-disjoint triangular neighborhoods, these neighborhoods would form four edge-disjoint triangles of $\blowup{2}{\Delta}$. But in any subdivision of $\blowup{2}{\Delta}$ into four edge-disjoint triangles (\cref{fig:tricover-unshare}, left), each two triangles share a vertex, and together cover only five vertices of $\blowup{2}{\Delta}$. If the four images of $t$ were placed in images of these four triangles, the two triangular neighborhoods of $t_0$ would miss one of the six neighbors of $t_0$ in $\blowup{2}{H}$, as would the two triangular neighborhoods of $t_1$, preventing the drawing from being valid. Therefore, no such drawing is possible.
 \end{proof}

\begin{figure}[t]
\centering\includegraphics[scale=0.25]{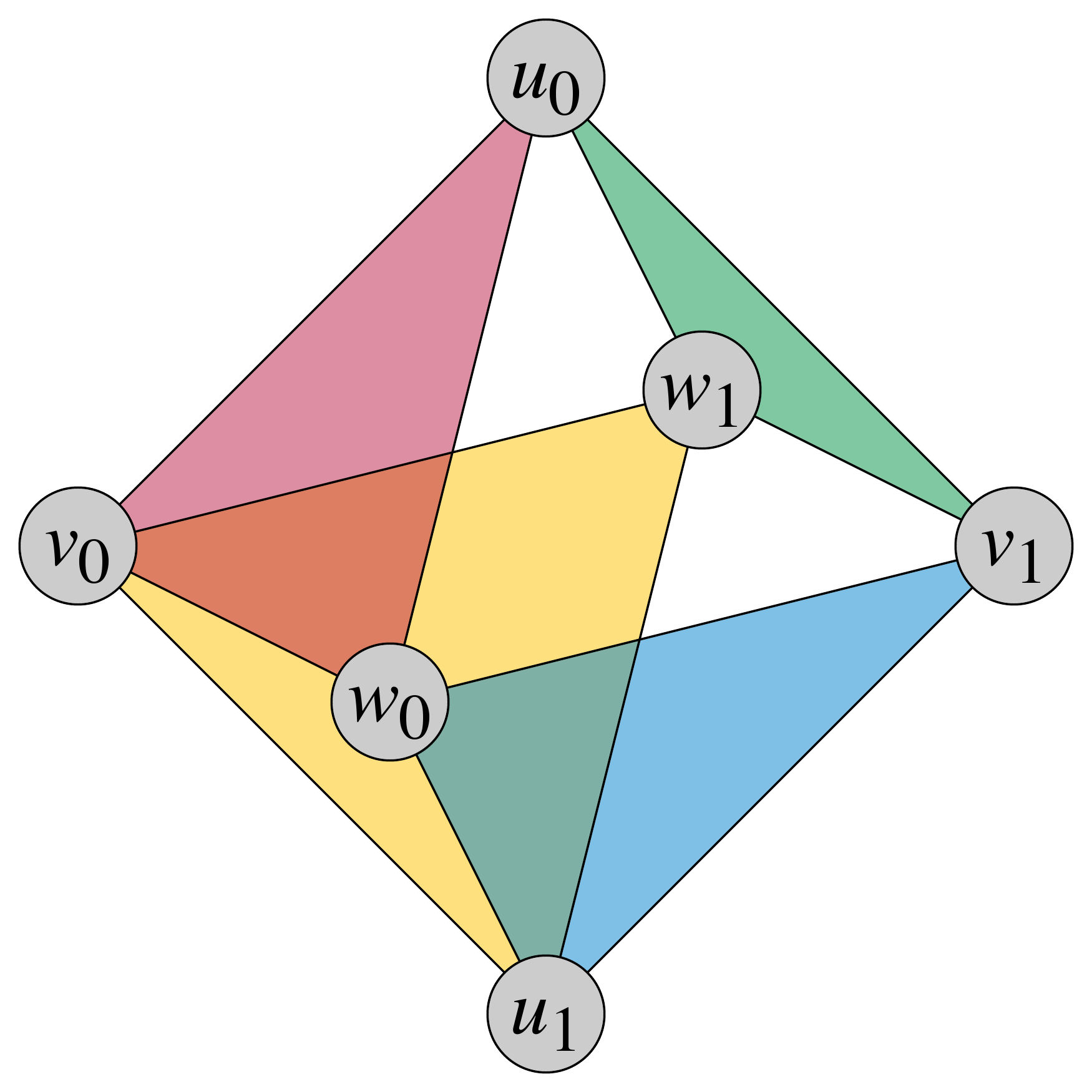}
\qquad\includegraphics[scale=0.25]{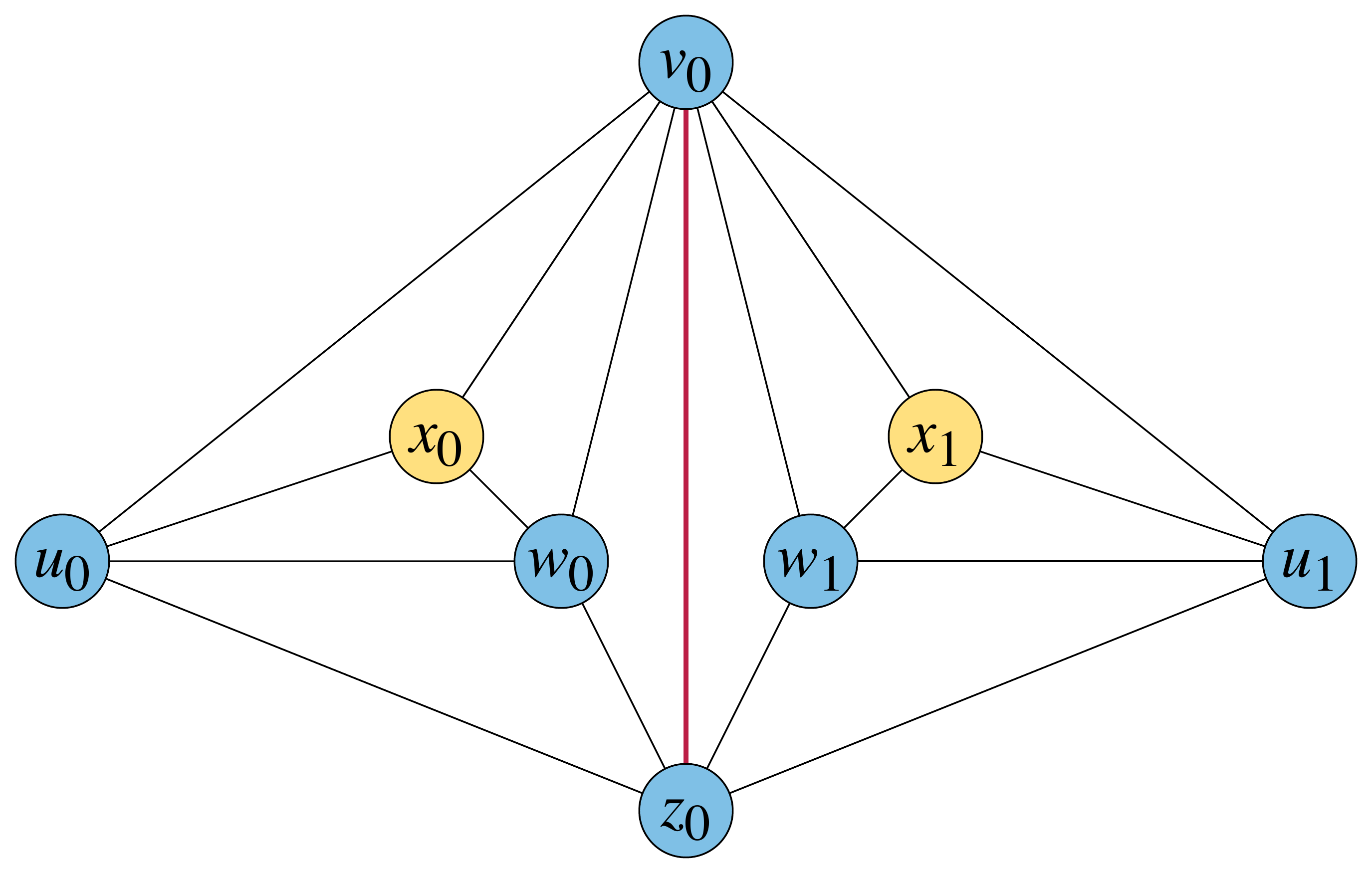}
\caption{Left: Illustration for \cref{lem:no-edge-disjoint}: partition of $\blowup{2}{\Delta}$ into four triangles, for a triangle $\Delta=uvw$. Right: Illustration for \cref{thm:kleetope}: For the restriction of a given drawing to $\blowup{2}{KG}$, and for $w$ in $KG$, images $w_0$ and $w_1$ have triangular neighborhoods sharing edge $v_0z_0$ (red). The third vertices of these triangular neighborhoods, $u_0$ and $u_1$, are distinct images of a neighbor $u$ of $w$. For vertex $x$ in $K^2G$ adjacent to $w$ and to $u$, two images have triangular neighborhoods without shared edges.}
\label{fig:tricover-unshare}
\end{figure}

\begin{theorem}
\label{thm:kleetope}
Let $G$ be a maximal planar graph with $n$ vertices. If $n\ge 49$, then $\blowup{2}{K^3G}$ has no biplanar drawing, and if $n\ge 73$, then $\blowup{2}{K^3G}$ has no split thickness two drawing.
\end{theorem}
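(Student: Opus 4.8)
The plan is to suppose for contradiction that $\blowup{2}{K^3G}$ has a biplanar drawing $D$; the split-thickness-two case runs identically, using the bound $n\ge 73$ wherever the biplanar case uses $n\ge 49$. The idea is to chase structural constraints down through the restrictions of $D$ to $\blowup{2}{K^2G}$, $\blowup{2}{KG}$ and $\blowup{2}{G}$, producing at the bottom a vertex that is forced into a configuration that \cref{lem:no-edge-disjoint} forbids. First I would locate a heavily constrained vertex two Kleetope layers down: restricting $D$ to $\blowup{2}{G}$ and applying \cref{lem:triangulated} yields a vertex $v$ of $G$ with triangulated neighborhoods, and then restricting $D$ to $\blowup{2}{KG}$ and applying \cref{lem:triangular} yields a neighbor $w$ of $v$ in $KG\setminus G$ with triangular neighborhoods. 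Let $u,v,z$ be the three pairwise adjacent neighbors of $w$ in $KG$. By \cref{lem:no-edge-disjoint} the four images of $w$ cannot all have edge-disjoint neighborhoods; and since, by the remark after \cref{lem:triangular}, the two triangular neighborhoods of $w_0$ are vertex-disjoint and exhaust the six copies of $\{u,v,z\}$ (and likewise for $w_1$), the unavoidable sharing must occur between an image of $w_0$ and an image of $w_1$. Using $\blowup{2}{\{u,v,z\}}\cong K_{2,2,2}$, a short analysis of how two triangles of $K_{2,2,2}$ can share an edge shows that, after relabeling copies, one image of $w_0$ has neighborhood $\{u_0,v_0,z_0\}$ and one image of $w_1$ has neighborhood $\{u_1,v_0,z_0\}$, which then forces the complementary neighborhoods to be $\{u_1,v_1,z_1\}$ for $w_0$ and $\{u_0,v_1,z_1\}$ for $w_1$. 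As a result, the only triangular faces incident to an image of $w$ that also contain a copy of $u$ and a copy of $z$ are $F_1=\{u_0,z_0,w_0\}$, $F_2=\{u_1,z_1,w_0\}$, $F_3=\{u_1,z_0,w_1\}$ and $F_4=\{u_0,z_1,w_1\}$; these four triangles are pairwise edge-disjoint, and any two of them together cover only five of the six copies of $\{u,z,w\}$ --- exactly the bad configuration of \cref{lem:no-edge-disjoint}.

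Next I would descend one more Kleetope layer. Let $x$ be the vertex of $K^2G$ inserted into the face $uzw$ of $KG$ (a neighbor of $w$ in $K^2G\setminus KG$), so that the three pairwise adjacent neighbors of $x$ in $K^2G$ are $u$, $z$ and $w$. Since $w$ has triangular, hence triangulated, neighborhoods in the restriction of $D$ to $\blowup{2}{KG}$, a second application of \cref{lem:triangular}, now with the maximal planar graph $KG$ in the role of the base graph and $K^2G$ in the role of its Kleetope, shows that $x$ has triangular neighborhoods in the restriction of $D$ to $\blowup{2}{K^2G}$. Each image of $x$ lies inside some face of the restriction of $D$ to $\blowup{2}{KG}$; because that face is incident to an image of $w$, all of whose surrounding faces are triangles, it is one of the three triangles around that image of $w$, and because the image of $x$ is also joined to a copy of $u$ and a copy of $z$, that face must be one of $F_1,\dots,F_4$. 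Hence the triangular neighborhoods of the two images of $x_0$ both lie among $F_1,\dots,F_4$, so together they contain at most five of the six neighbors of $x_0$ in $\blowup{2}{K^2G}$. But $x_0$ has degree six, and every one of its neighbors must be adjacent to one of its two images, a contradiction (equivalently, $x$ witnesses an impossibility of the kind ruled out by \cref{lem:no-edge-disjoint}). This shows $\blowup{2}{K^3G}$ has no biplanar drawing when $n\ge 49$, and the same chain of arguments, run with the split-thickness versions of \cref{lem:triangulated} and \cref{lem:triangular}, shows it has no split thickness two drawing when $n\ge 73$.

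The step I expect to be the main obstacle is justifying, in the biplanar setting, the claim that the face of the restricted drawing containing an image of $x$ has exactly the three vertices $u_i$, $z_j$, $w_k$ on its boundary. In a split-thickness drawing, which is genuinely planar, this is immediate, but a biplanar drawing permits an edge to cross edges of the other color, so this face-incidence claim --- and the analogous incidences already invoked in \cref{lem:triangular,lem:no-edge-disjoint} --- must be deduced from the triangulated structure surrounding the images of $w$ rather than from a naive crossing-free argument. The only other point requiring care is bookkeeping: tracking the relabeling used in the shared-edge analysis consistently across the two Kleetope layers, so that $F_1,\dots,F_4$ really are the only faces available to the images of $x$.
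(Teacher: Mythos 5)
Your overall architecture is the paper's: use \cref{lem:triangulated} and \cref{lem:triangular} to force a vertex $w$ of $KG$ with triangular neighborhoods, analyze how the four neighborhoods can share edges inside $\blowup{2}{\{u,v,z\}}\cong K_{2,2,2}$, and then kill a newly inserted Kleetope vertex by the covering count that underlies \cref{lem:no-edge-disjoint}. The genuine gap is the normalization step. Relabeling the copies (and swapping the symmetric letters $u$ and $z$) controls subscripts, but it cannot control \emph{which edge of the triangle $uvz$} the shared edges are copies of, and your later choice of $x$ depends on that. Concretely, it is equally possible that the neighborhoods are $\{u_0,v_0,z_0\},\{u_1,v_1,z_1\}$ for $w_0$ and $\{u_0,v_1,z_0\},\{u_1,v_0,z_1\}$ for $w_1$; then the two shared edges are $u_0z_0$ and $u_1z_1$, and the non-shared corners are the two images of $v$, not of $u$. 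In that configuration your fixed choice of $x$ (the vertex inserted into the face $uzw$) yields the four candidate faces $w_0u_0z_0$, $w_0u_1z_1$, $w_1u_0z_0$, $w_1u_1z_1$, which are \emph{not} edge-disjoint, and the images of $x_0$ can be placed in $w_0u_0z_0$ and $w_1u_1z_1$ (with $x_1$ in the other two), covering all six neighbors of each copy; no contradiction follows and your argument stops. This is precisely why the paper lets the drawing dictate the choice: it takes $x$ adjacent to $w$ and to the vertex $u$ whose two images are the non-shared corners of the sharing triangles, a choice that cannot be fixed in advance by a claimed ``after relabeling.''

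Two further points. First, your ``short analysis'' of sharing also skips the a priori case that $w_0$ and $w_1$ use the very same pair of triangles (three shared edges within one plane); the paper's first bullet rules this out (each image can share edges only with its plane-mate, and only one edge), and some such argument is needed before you may speak of \emph{the} shared edge and its complementary one. Second, note that you never use the third Kleetope iteration: you claim the contradiction already inside $\blowup{2}{K^2G}$, whereas the paper only concludes that at the $K^2G$ level the neighborhoods of $x$ can still share one edge and descends once more, applying \cref{lem:no-edge-disjoint} to a vertex of $K^3G$. With the corrected, configuration-dependent choice of $x$ your counting may indeed close earlier than the paper's does (which would still suffice for \cref{thm:kleetope}, since $\blowup{2}{K^2G}$ is a subgraph of $\blowup{2}{K^3G}$), but as written the case analysis is incomplete and the proof fails on a legitimate configuration.
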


\begin{proof}
Suppose for a contradiction that such a drawing existed, and consider the drawings within it of $\blowup{2}{K^2G}$ and of $\blowup{2}{KG}$. We will find a vertex with triangular neighborhoods in each of these drawings so that the four neighborhoods of the four images of the chosen vertex are nearly disjoint: these neighborhoods can share at most two edges total in $\blowup{2}{KG}$, at most one edge in $\blowup{2}{K^2G}$, and no edges in $\blowup{2}{K^3G}$. The existence of a vertex in $K^3G$  whose triangular neighborhoods share no edges will contradict \cref{lem:no-edge-disjoint}, showing that no such drawing can exist. To do this, we consider each level of iteration successively, as follows:
\begin{itemize}
\item In the drawing of $\blowup{2}{KG}$, \cref{lem:triangular} gives us a vertex $w$ of $KG$ with triangular neighborhoods. The neighborhood of each image of $w$ shares at most one edge with other neighborhoods of images of $w$. For biplanar drawings this is immediate (only one other image is in the same planar subgraph and can share an edge with it). For split thickness two drawings, a neighborhood of $w_0$ that shares edges with the neighborhoods of both images of $w_1$ is impossible, because then the two images of $w_1$ would share a vertex, preventing them from covering all six neighbors of $w_1$. Therefore, among the neighborhoods of all four images of $w$, there are at most two shared edges.

\smallskip
\item To find a vertex $x$ in $K^2G$ with triangular neighborhoods in the drawing of $\blowup{2}{K^2G}$, with at most one edge shared among the neighborhoods of its four images, consider the vertex $w$ found above in $KG$. If the neighborhoods of $w$ have at most one shared edge in the drawing of $\blowup{2}{KG}$, let $x$ be any neighbor of $w$ added in forming $K^2G$ from $KG$. Then $x$ must again have triangular neighborhoods by \cref{lem:triangular}. Because these triangular neighborhoods must be interior to the triangular neighborhoods of $w$, they can have at most one shared edge (the same edge as the one shared by the neighborhoods of $w$).

\smallskip
Suppose, on the other hand, that the triangular neighborhoods of $w$ share exactly two edges. Let $\Delta_0$, $\Delta_0'$, $\Delta_1$, and $\Delta_1'$ be the four triangles in $\blowup{2}KG$ neighboring the two images of $w_0$ and $w_1$, respectively, with an edge shared by $\Delta_0$ and $\Delta_1$ and another edge shared by $\Delta_0'$ and $\Delta_1'$. Because these triangles can only share one edge, and each image of $w$ must be adjacent to images of all three neighbors of $w$, the vertices of $\Delta_0$ and $\Delta_1$ that are not on the shared edge must be distinct images of the same vertex $u$. Choose a vertex $x$ of $K^2G\setminus KG$, adjacent to $w$ and to $u$.

\smallskip
Because $w$ has triangular neighborhoods in $KG$, its four images in the drawing of $\blowup{2}KG$ are each surrounded by three triangles,
formed by images of $w$ and two of its neighbors. For each image, only one of these triangles consists of the three neighbors of $x$. Thus, the four images of $x$ must be placed in these four triangles. Two of these four triangles are subdivisions of $\Delta_0$ and $\Delta_1$, containing the non-shared images of $u$, and therefore do not share any edge with each other. The only possible shared edge among the triangular neighborhoods of $x$ is the edge shared by $\Delta_0'$, and $\Delta_1'$, within which lie the other two images of $x$. Thus, by choosing $x$ in $K^2G$ we have eliminated one shared edge between triangular neighborhoods. See \cref{fig:tricover-unshare}, right.

\smallskip
\item If the four triangular neighborhoods of $x$ in the drawing of $\blowup{2}K^2G$ are edge-disjoint, we already have a contradiction with \cref{lem:no-edge-disjoint}. Otherwise, we must find a vertex $t$ in $K^3G$ whose triangular neighborhoods are edge-disjoint, giving us the desired contradiction. To do so, consider the four triangular neighborhoods of $x$ in the drawing of $\blowup{2}K^2G$, only two of which share one edge.
For the two triangles that share an edge, the two non-shared vertices of these triangles must be distinct images of the same vertex $y$ in $K^2 G$.
Choose a vertex $t$ of $K^3G\setminus K^2G$, adjacent to $x$ and to $y$. Then the four triangles in which $z$ must be placed lie within the four triangular neighborhoods of $x$, away from the shared edge of these triangular neighborhoods, so they cannot share any edges with each other. By \cref{lem:no-edge-disjoint}, this is an impossibility.
\end{itemize}
\end{proof}

When $G$ is not maximal planar or is too small for the theorem to apply directly, more iterations of the Kleetope operation can be used before applying the same argument. Thus, there exists an $i$ such that for every plane graph $G$, $\blowup{2}{K^iG}$ is not biplanar and has no split thickness two drawing.

\section{Drawings from Triangle Strips}

\begin{definition}
\label{def:outerpath}
An \emph{outerpath} is an outerplanar graph whose weak dual (the adjacency graph of its bounded faces) is a path.
\end{definition}

Suppose that the dual vertices of a planar graph $G$ can be partitioned into two subsets that each induce a path in the dual graph. Then the dual cut edges between these two subsets correspond, in $G$ itself, to a Hamiltonian cycle that partitions $G$ into two outerpaths. \cref{fig:2strip-icos} depicts an example of this sort of \emph{two-outerpath decomposition} for the graph of the regular icosahedron.

\begin{theorem}
\label{thm:outerpaths}
If a planar graph $G$ has a two-outerpath decomposition, then $\blowup{2}{G}$ has a biplanar drawing.
\end{theorem}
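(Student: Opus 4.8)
The plan is to use the two‑outerpath decomposition to exhibit a biplanar drawing of $\blowup{2}{G}$ as an edge‑disjoint union of two planar graphs, one built from each outerpath. Fix the planar embedding of $G$, and let the decomposition give a Hamiltonian cycle $C=v_1v_2\cdots v_n$ (with a fixed orientation), an outerpath $R$ formed by the faces of $G$ inside $C$, and an outerpath $B$ formed by the faces outside $C$; thus the edges of $G$ are partitioned into the edges of $C$, the interior edges (``chords'') of $R$, and the chords of $B$. I would put into the \emph{red graph} all four copies of each chord of $R$, together with the two copies $v_{i,0}v_{i+1,0}$ and $v_{i,0}v_{i+1,1}$ of each cycle edge $v_iv_{i+1}$ (the copies incident to the $0$‑copy of the tail), and symmetrically put into the \emph{blue graph} all four copies of each chord of $B$ together with the two copies $v_{i,1}v_{i+1,0},\, v_{i,1}v_{i+1,1}$ of each cycle edge. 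These edge sets are disjoint and together are all of $\blowup{2}{G}$, so it is enough to draw each one in the plane. Swapping the two copies of every vertex is an automorphism carrying the blue graph to the same construction applied to $B$ instead of $R$, so the whole theorem reduces to the following claim: for any outerpath $O$ with boundary cycle $C_O$, the graph obtained from $\blowup{2}{O}$ by keeping, for each edge of $C_O$, only the two copies incident to the $0$‑copy of its tail (and all four copies of every other edge of $O$) is planar.

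I would prove this claim by induction on the number of bounded faces of $O$, which form a path $F_1-F_2-\cdots-F_k$ in the weak dual; the induction adds one face at a time at the end of this path, gluing the polygon $F_{j+1}$ onto the current partial outerpath $O_j=F_1\cup\cdots\cup F_j$ along the shared edge $e_j=F_j\cap F_{j+1}$, which lies on the boundary of $O_j$. The base case (one polygon) is immediate: the surviving edges form a single cycle on the $0$‑copies with one pendant $1$‑copy hanging from each cycle vertex, which is planar. For the inductive step I would carry an invariant stronger than planarity: the partial drawing has a designated \emph{active face} on whose boundary all four images $v_{a,0},v_{a,1},v_{b,0},v_{b,1}$ of the endpoints of the active edge $e_j$ appear, in a prescribed cyclic pattern; and moreover every edge of $O_j$ that has already become interior carries all four of its copies, while the boundary edges carry exactly two copies each. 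Gluing $F_{j+1}$ is then a purely local operation inside the disk bounded by the active face: insert the two images of each new vertex of $F_{j+1}$, add the two copies of $e_j$ that are now required because $e_j$ has just turned interior, add the prescribed two copies of each new boundary edge, and verify that all of this can be routed without crossings so that the endpoints of the next active edge again occupy a single face in the prescribed pattern. Euler's formula leaves ample surplus at every step, so the substance lies entirely in this routing.

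The main obstacle is pinning down the inductive invariant correctly — in particular choosing the prescribed cyclic order of the four endpoint‑images around the active face so that the local surgery for attaching the next polygon succeeds whichever of its remaining edges the weak‑dual path continues through, and handling cleanly the moment a boundary edge turns into an interior edge and must be ``upgraded'' from two copies to four. Once the invariant is set up, each gluing step is a routine finite case analysis. Assembling the red and blue graphs then gives a biplanar drawing of $\blowup{2}{G}$; since the two layers are edge‑disjoint and together contain every edge, this drawing has no repeated edges and witnesses biplanarity.
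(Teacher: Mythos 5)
Your reduction is to a claim that is false, so the argument cannot be completed as proposed. The problem is the way you split the copies of the Hamiltonian-cycle edges: giving the red layer, for every cycle edge $v_iv_{i+1}$, the two copies incident to the single image $v_{i,0}$ creates $K_{3,3}$ subgraphs once combined with the four copies of the chords. Concretely, let $O$ be the hexagon $v_1v_2\cdots v_6$ triangulated as a fan from $v_1$ (chords $v_1v_3$, $v_1v_4$, $v_1v_5$; its weak dual is a path, and it is one side of a two-outerpath decomposition of a $6$-vertex maximal planar graph whose outside is triangulated as a snake with chords $v_2v_6$, $v_2v_5$, $v_3v_5$). In your red graph, $v_{1,0}$ and $v_{1,1}$ are adjacent to all copies of $v_3$ and $v_5$, since all four copies of each chord are present; and $v_{4,0}$ is adjacent to $v_{3,0}$ (the surviving copies of $v_3v_4$ are those incident to the tail image $v_{3,0}$) and to both $v_{5,0}$ and $v_{5,1}$ (the surviving copies of $v_4v_5$ are those incident to $v_{4,0}$). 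Hence $\{v_{1,0},v_{1,1},v_{4,0}\}$ and $\{v_{3,0},v_{5,0},v_{5,1}\}$ span a $K_{3,3}$, and the red layer is not planar. The same obstruction appears whenever some vertex $u$ is joined by chords to both cycle-neighbors $v_{i-1}$ and $v_{i+1}$ of some $v_i$, which happens in essentially every triangulated outerpath of nontrivial size; so no choice of inductive invariant or routing can rescue this particular edge partition. Note also that, even for a correct partition, your write-up defers the whole planarity proof to an unspecified ``active face'' invariant and an unperformed case analysis, which is exactly where the content of \cref{thm:outerpaths} lies.

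The partition that does work, and the one the paper uses, gives each layer a perfect \emph{matching} between the copies of the two endpoints of every cycle edge, rather than the two copies incident to one fixed image: one layer consists of all four copies of the chords of one outerpath together with the same-index copies $u_0w_0,u_1w_1$ of every cycle edge, and the other layer consists of the chords of the other outerpath together with the crossed copies $u_0w_1,u_1w_0$. Planarity of each layer is then shown directly: the four copies of each chord $uv$ form a quadrilateral $u_0v_0u_1v_1$, consecutive chords share an endpoint, so their quadrilaterals can be nested sharing two opposite vertices; the annulus between consecutive quadrilaterals is cut by the shared vertices into two faces, each of which accommodates exactly one of the two matched cycle-edge copies (the orientation of the nesting is chosen per layer to make the same-index or crossed pairs adjacent), and the ears are handled inside the innermost and outside the outermost quadrilateral. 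Your cherry-shaped assignment (two copies hanging off one image) is incompatible with this picture, and as shown above it is incompatible with planarity altogether.
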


\begin{figure}[t]
\centering\includegraphics[scale=0.25]{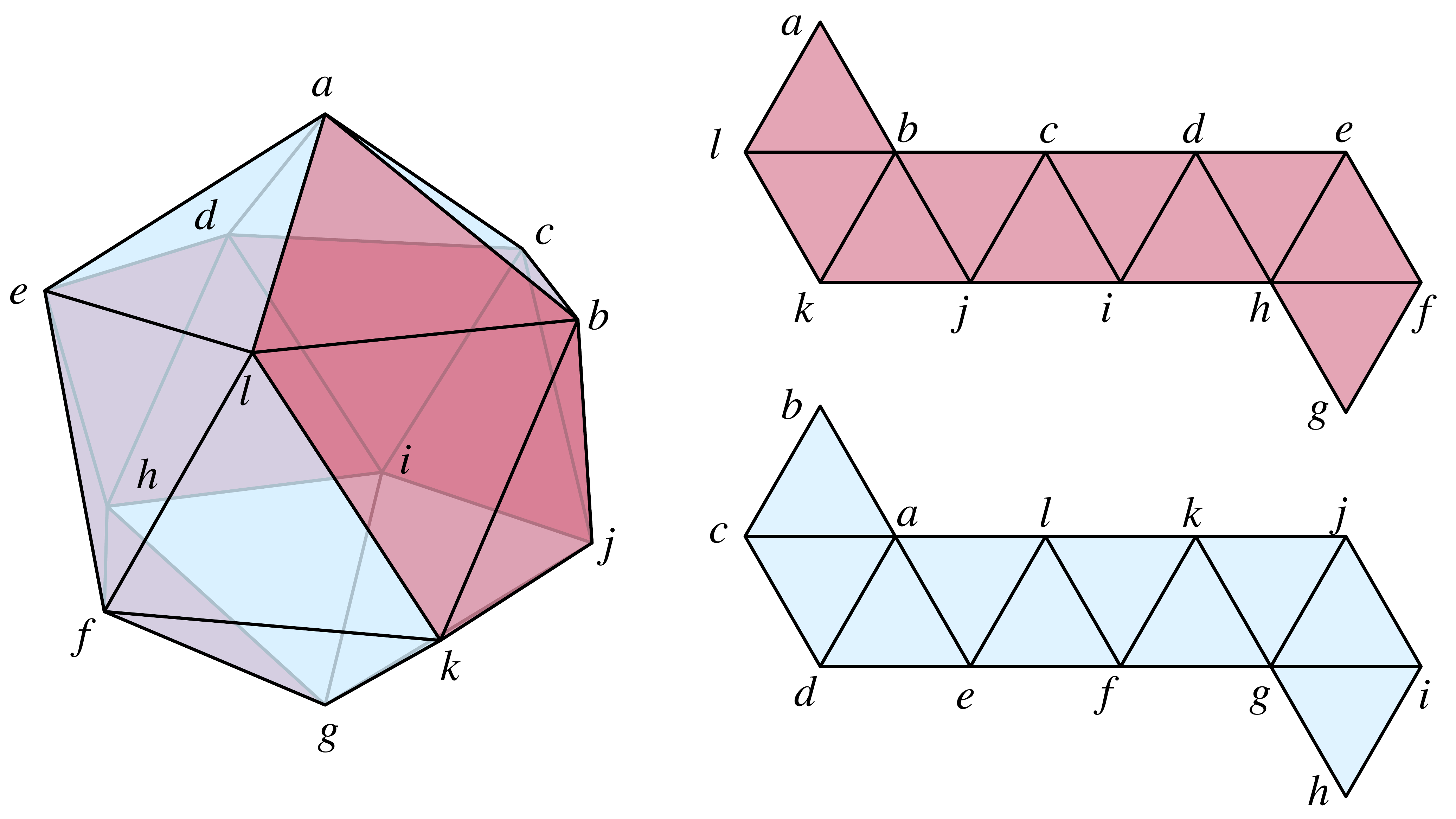}
\caption{Decomposition of an icosahedron into two outerpaths.}
\label{fig:2strip-icos}
\end{figure}

\begin{figure}[t]
\centering\includegraphics[scale=0.35]{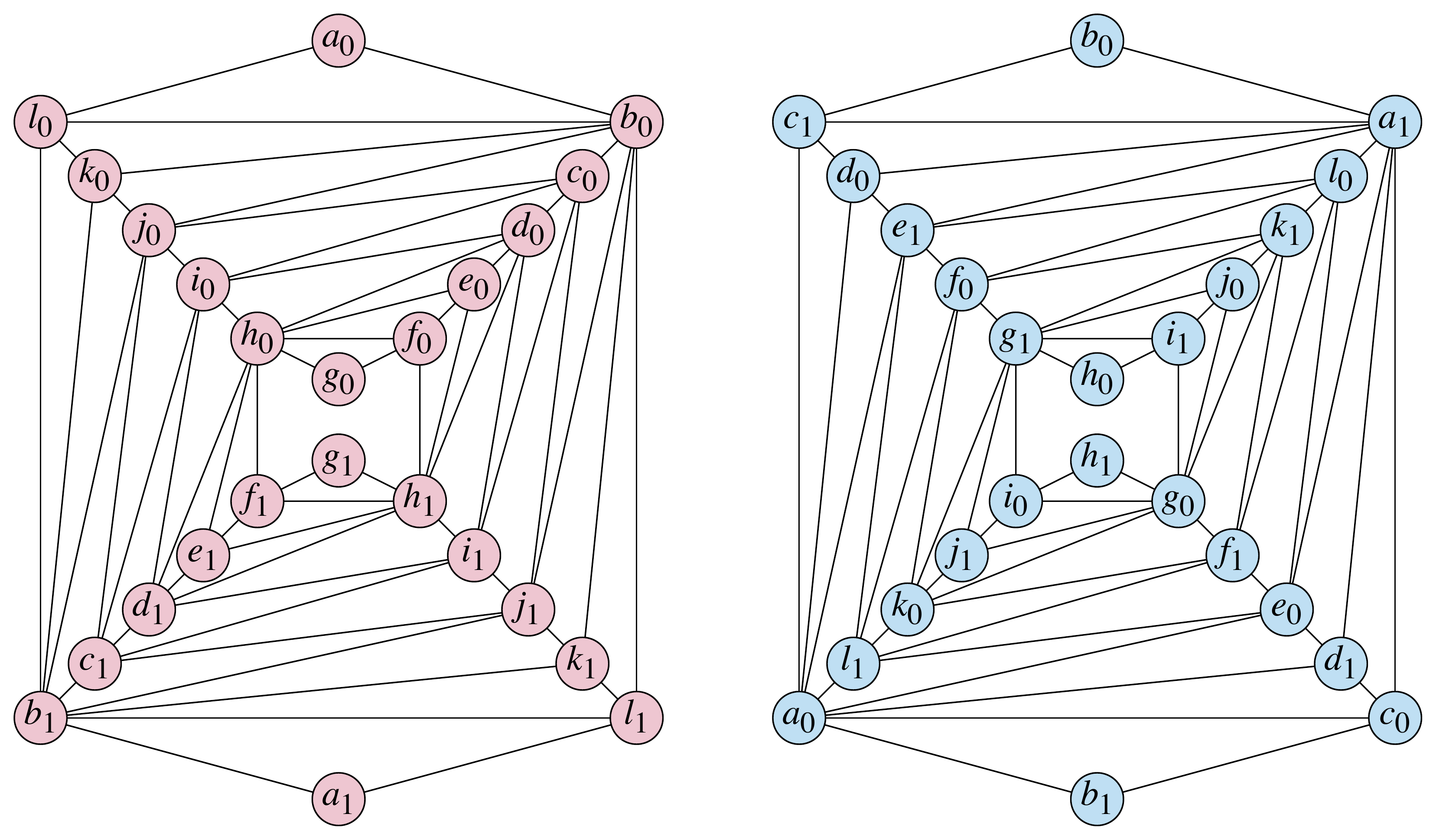}
\caption{Biplanar drawing of the 2-blowup of an icosahedron corresponding to the outerpath decomposition of \cref{fig:2strip-icos}.}
\label{fig:icos-blowup}
\end{figure}

\begin{proof}
We may assume without loss of generality, by triangulating each outerplanar graph if necessary, that both outerpaths are maximal outerplanar: each of their faces is a triangle. If this triangulation step adds two copies of the same edge to the graph, it is not a problem, because the edges added in this triangulation step will be removed from the final drawing.

In each outerpath, the triangular faces form a linear sequence, separated by the internal edges of the outerpath, which are also linearly ordered. In the blowup~$\blowup{2}{G}$, number the two copies of each vertex $v$ as $v_0$ and $v_1$. If $uv$ is one of the diagonals of one of the outerpaths (that is, one of its internal edges), then~$\blowup{2}{uv}$ is a four-vertex cycle $u_0v_0u_1v_1$.

We will construct a biplanar drawing of $\blowup{2}{G}$ with each plane containing all copies of interior edges of one of the two outerpaths, and two out of the four copies of each boundary edge. We draw copies of the interior edges as nested quadrilaterals, one for each diagonal of its outerpath, in the same order that these diagonals appear within the outerpath. If we draw these quadrilaterals one at a time, from the innermost to the outermost, then each two consecutive quadrilaterals share two opposite vertices, corresponding to the single shared endpoint of the two diagonals. 

In each pair of consecutive quadrilaterals, the outer quadrilateral has two potential orientations with respect to the inner one: if the two consecutive diagonals are $uv$ and $vw$, with quadrilateral $u_0v_0u_1v_1$ drawn inside quadrilateral $v_0w_0v_1w_1$, then these quadrilaterals may be drawn so that pairs $u_0w_0$ and $u_1w_1$ are adjacent, or so that pairs $u_0w_1$ and $u_1w_0$ are adjacent. In one plane we always choose the orientation with $u_0w_0$ and $u_1w_1$ adjacent, and we connect these pairs of vertices by an edge. In the other plane, we always choose the orientation with $u_0w_1$ and $u_1w_0$ adjacent, and we connect these pairs of vertices by an edge. In this way, we draw all four copies of each boundary edge, except the edges incident to the two ears (triangles with two boundary edges).

It remains to draw the ears. Each has two boundary edges sharing a vertex, which we call the \emph{ear vertex}. These two boundary edges have not yet been drawn in the plane of their outerpath.  The third edge of the ear is a diagonal whose images form the innermost or outermost quadrilateral in its plane. We place both copies of the ear vertex inside or outside this quadrilateral (respectively as it is innermost or outermost), connected to its two neighbors in the ear with the same numbering convention:
in the plane where the quadrilaterals are oriented with $u_0w_0$ and $u_1w_1$ adjacent, we connect each ear vertex to neighbors with the same subscript, and in the other plane we connect each ear vertex to neighbors with the opposite subscript.

Thus, all copies of the diagonals of one strip and all copies of boundary edges that connect copies having the same index are drawn in one plane. All copies of the diagonals of the other strip and all copies of boundary edges that connect copies having different indices are drawn in the other plane. The result is a biplanar drawing of the entire blowup $\blowup{2}{G}$.
\end{proof}

\cref{fig:icos-blowup} depicts the drawing obtained by applying \cref{thm:outerpaths} to the outerpath decomposition of the icosahedron depicted in \cref{fig:2strip-icos}. The following extension of this result is noteworthy in connection with our iterated Kleetope counterexample:

\begin{theorem}
\label{thm:split-kleetope}
If a maximal planar graph $G$ has a decomposition into two outerplanar graphs, coming from an induced path partition of its dual graph into two paths, then $\blowup{2}{KG}$ has a drawing with split thickness two.
\end{theorem}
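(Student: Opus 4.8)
The plan is to start from the biplanar drawing of $\blowup{2}{G}$ given by \cref{thm:outerpaths} and to insert the two blown-up copies of each Kleetope vertex into its faces, exploiting the extra room of split thickness two (two images per vertex) to reach all six neighbors of each such copy. Because $G$ is maximal planar, the two outerpaths $O_1,O_2$ of its decomposition are already maximal outerplanar and share a Hamiltonian cycle $C$ of $G$; every face of $G$ is a triangle lying in exactly one of $O_1,O_2$, and each vertex $w_f$ added in forming $KG$ from $G$ has degree three. Write $D=D_1\cup D_2$ for the biplanar drawing of \cref{thm:outerpaths}, where $D_1$ holds all four copies of every diagonal of $O_1$ together with the same-index copies of the edges of $C$, and $D_2$ is the mirror construction for $O_2$ with the opposite-index copies of the edges of $C$. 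Drawing $D_1$ and $D_2$ in disjoint regions of one plane is the standard way of turning a thickness-two drawing into a split thickness two drawing; here it gives each vertex of $\blowup{2}{G}$ at most two images, since every vertex of $G$ lies on $C$ and so $v_0$ (and likewise $v_1$) has incident edges in both $D_1$ and $D_2$.

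The next step is to read off the local face structure of $D_1$; that of $D_2$ is identical after interchanging index patterns. The diagonals of $O_1$ appear in $D_1$ as nested quadrilaterals $\blowup{2}{d_1}\subset\cdots\subset\blowup{2}{d_{p-1}}$ in outerpath order, consecutive quadrilaterals sharing the two opposite vertices coming from the common endpoint of their diagonals. For a band triangle $f=abc$ of $O_1$ whose diagonals $ca,cb$ meet at $c$ and whose third (boundary) edge is $ab$, the annular region between $\blowup{2}{ca}$ and $\blowup{2}{cb}$ is divided by the two copies of $c$ into two quadrilateral regions, and after the two same-index copies $a_0b_0$ and $a_1b_1$ of $ab$ are drawn across them this region becomes exactly four triangular faces, with vertex sets $\{c_0,a_0,b_0\}$, $\{c_1,a_0,b_0\}$, $\{c_0,a_1,b_1\}$, and $\{c_1,a_1,b_1\}$. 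For an ear triangle $f=abc$ of $O_1$ with extreme diagonal $ac$ and ear vertex $b$, the copies $b_0,b_1$ sit inside (or outside) $\blowup{2}{ac}$ joined to the endpoints of $ac$ with equal index, and the interior (resp.\ exterior) of $\blowup{2}{ac}$ then splits into two triangular faces $\{a_0,b_0,c_0\}$ and $\{a_1,b_1,c_1\}$ together with one hexagonal face whose vertices are all of $a_0,a_1,b_0,b_1,c_0,c_1$. In $D_2$ the corresponding band faces have vertex sets $\{c_0,a_0,b_1\}$, $\{c_1,a_0,b_1\}$, $\{c_0,a_1,b_0\}$, $\{c_1,a_1,b_0\}$, and the ear configuration is analogous with the opposite-index convention.

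Now I insert the Kleetope copies, placing those of $O_1$'s faces into $D_1$ and those of $O_2$'s faces into $D_2$. For a band triangle $f=abc$ of $O_1$, put one image of $(w_f)_0$ in $\{c_0,a_0,b_0\}$ and the other in $\{c_1,a_1,b_1\}$, and one image of $(w_f)_1$ in $\{c_1,a_0,b_0\}$ and the other in $\{c_0,a_1,b_1\}$, each image joined to the three corners of its face. The two complementary pairs of triangles each cover all six copies of $\{a,b,c\}$, so every edge at $(w_f)_0$ and at $(w_f)_1$ is drawn exactly once, all four faces are used once, and $(w_f)_0,(w_f)_1$ each receive two images. (A band triangle of $O_2$ is handled the same way with the index pattern of $D_2$.) For an ear triangle $f=abc$ with ear vertex $b$, put the single image of $(w_f)_0$ in the hexagonal face joined to all six of its vertices, and one image of $(w_f)_1$ in $\{a_0,b_0,c_0\}$ and the other in $\{a_1,b_1,c_1\}$; again all six edges at each of $(w_f)_0,(w_f)_1$ are drawn exactly once. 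Doing this for every face of $G$ produces a single planar drawing in which each vertex of $\blowup{2}{KG}$ has at most two images and each edge appears exactly once, which is the desired split thickness two drawing.

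I expect the main obstacle to be the second step: checking precisely that the \cref{thm:outerpaths} construction leaves exactly the faces claimed around each band and ear triangle, and in particular that the ``same index versus opposite index'' orientation conventions of the two planes make the four triangles around a band triangle split into the two complementary triples (so that one pair of images can reach all six copies of $a,b,c$ edge-disjointly). Once this bookkeeping is in place, the insertion and the closing verification — planarity is preserved because we only subdivide existing faces, no edge is repeated, and no vertex acquires a third image — are routine. A minor technical point is the side-by-side placement of $D_1$ and $D_2$: one chooses their outer faces so that the two pieces occupy disjoint regions of the plane, and then checks, using that $C$ is Hamiltonian, that no original vertex gets more than two images.
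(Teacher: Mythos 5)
Your proposal is correct and takes essentially the same route as the paper: start from the drawing of \cref{thm:outerpaths}, note that each band triangle of an outerpath has four triangular images forming two disjoint pairs covering all six vertex copies, and place the images of the two blown-up copies of each added Kleetope vertex into those faces. The only divergence is at the ears, where the paper adds two extra edges so that ear triangles also acquire four triangular images in two disjoint pairs, whereas you use the hexagonal face directly (one image of one copy joined to all six hexagon vertices, two images of the other copy in the two triangles); both work, and your variant has the minor advantage of not re-drawing copies of boundary edges already present in the other plane.
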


\begin{proof}
From the drawing of \cref{thm:outerpaths} for $\blowup{2}{G}$, add two more edges from the ear vertices to the other vertices on the hexagonal faces they belong to, so that each of the two planes of the drawing contains four images of each triangular face of its outerplanar subgraph of $G$, in two disjoint pairs. These four triangles can be used to place the four images of each added vertex of $KG$.
\end{proof}

\begin{figure}[t]
\centering\includegraphics[scale=0.25]{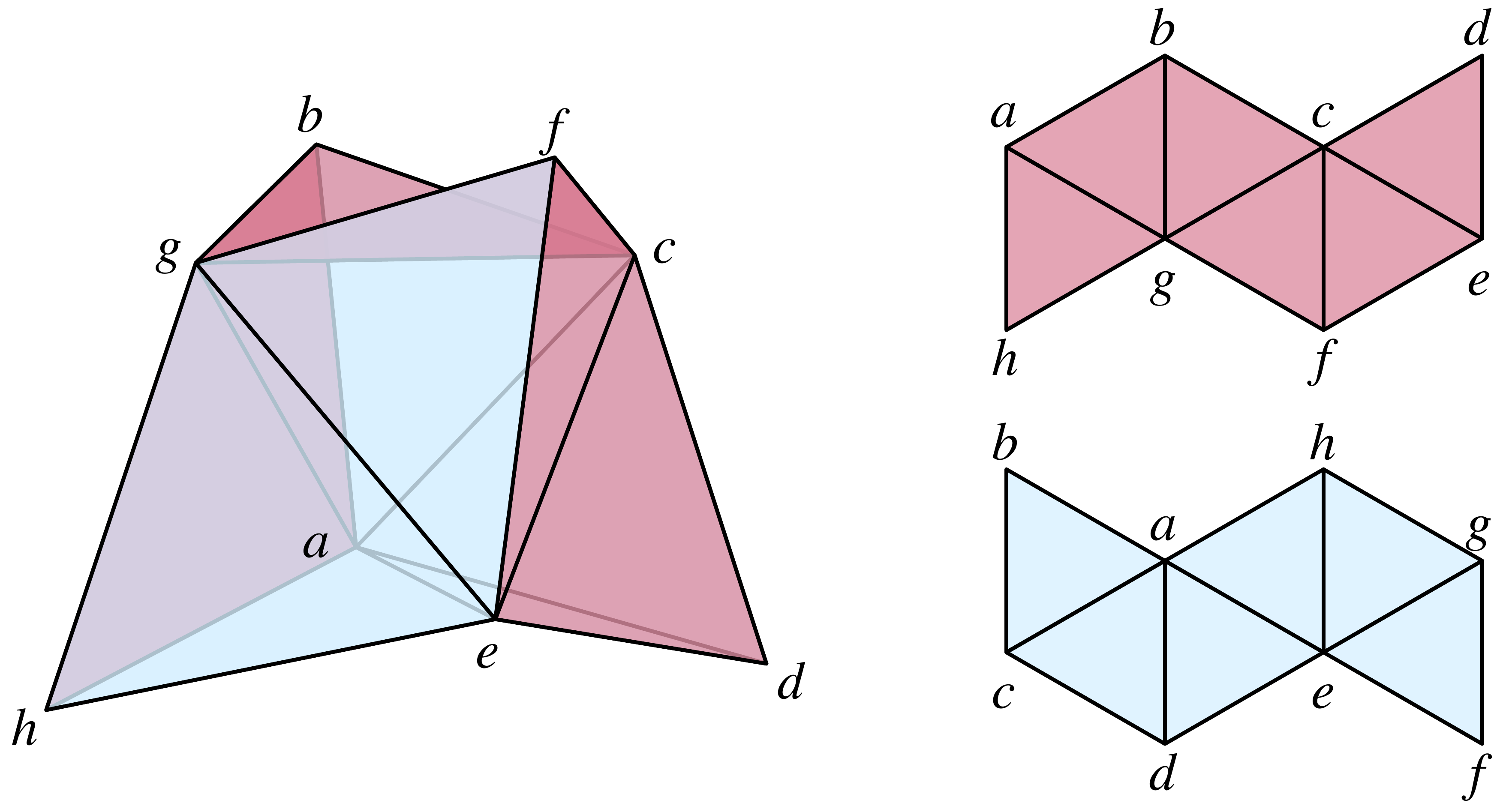}
\caption{Outerpath decomposition of $KK_4$.}
\label{fig:2strip-kk4}
\end{figure}

For instance, the triakis icosahedron, the Kleetope of the icosahedron, is a maximal planar graph whose edges all have total degree $\ge 13$. (This is the maximum possible for the minimum total degree of an edge, by Kotzig's theorem~\cite{Kot-MFC-55}.) Because the icosahedron has a two-outerpath decomposition, we can apply \cref{thm:split-kleetope} to its Kleetope, producing a drawing with split thickness two of the 2-blowup of the triakis icosahedron.

In general, we cannot extend this construction to higher-order Kleetopes. When a graph $G$ has a two-outerpath decomposition, the drawings of $\blowup{2}{KG}$ produced from $G$ by \cref{thm:split-kleetope} again have four images of each triangular face of~$KG$, but some of these quadruples of images cannot be grouped into disjoint pairs. However, in the method of \cref{thm:split-kleetope} each added vertex of $K^2G$ corresponds to two vertices in $\blowup{2}{K^2G}$, and the two images of each of these two vertices must be placed in disjoint triangles, in order to provide all six of its adjacencies. Therefore, this method does not provide drawings of $\blowup{2}{K^2G}$. However, in one special case, for $G=K_4$ (the graph of a tetrahedron), a different method works. In this case, the Kleetope $KK_4$ has an outerpath decomposition, shown in \cref{fig:2strip-kk4}. Therefore, applying \cref{thm:split-kleetope} we can obtain a split thickness two drawing of $\blowup{2}{K^2K_4}$.

A very similar drawing algorithm to the one in \cref{thm:outerpaths} can be used for graphs with a different form of decomposition into triangle strips.

\begin{definition}
Let $G$ be a planar graph having both a Hamiltonian path $P$ and a dual Hamiltonian path $P^*$, with no edge and its dual edge belonging to both paths. Then we call $(P,P^*)$ a \emph{path--copath decomposition}. It is a special case of the \emph{tree--cotree decomposition} formed from any spanning tree of a planar graph and the dual spanning tree formed by the duals of the complementary set of edges~\cite{Epp-SODA-03}.
\end{definition}

\begin{theorem}
\label{thm:path-copath}
Let planar graph $G$ have a path--copath decomposition. Then $\blowup{2}{G}$ has a drawing with split thickness two.
\end{theorem}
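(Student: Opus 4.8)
The plan is to imitate the construction of \cref{thm:outerpaths}, applied now to a single triangle strip rather than to two strips joined along a Hamiltonian cycle. The first step is to cut $G$ open along its Hamiltonian path $P$ --- equivalently, to glue the faces of $G$ together one after another in the order given by the dual Hamiltonian path $P^*$. I claim this yields a maximal outerpath $\tilde G$ (triangulating the faces of $\tilde G$ into substrips first, as in \cref{thm:outerpaths}, if $G$ is not already maximal planar; the blown-up copies of the added diagonals will be deleted from the final drawing). Indeed, because $P$ is a spanning tree, cutting along it turns the sphere into a disk whose boundary cycle traverses each edge of $P$ twice; each interior vertex of $P$ appears twice on this cycle and each of the two endpoints once. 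The bounded faces of $\tilde G$ are exactly the faces of $G$, adjacent exactly when they share an edge not in $P$, so the weak dual of $\tilde G$ is $P^*$ --- a path --- and the diagonals of $\tilde G$ are precisely the edges of $G$ outside $P$, while its $2(n-1)$ boundary edges are the two copies of each edge of $P$.

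Next I would run on $\tilde G$ the part of the algorithm of \cref{thm:outerpaths} that produces one plane of a biplanar drawing: draw all four blown-up copies of each diagonal of $\tilde G$ as nested quadrilaterals, innermost to outermost, and for each boundary edge of $\tilde G$ draw two of its four blown-up copies in the lune between the two consecutive quadrilaterals it separates, or --- for the two ear edges at an ear --- inside the innermost or outside the outermost quadrilateral. The choice of orientation (``same-index'' or ``opposite-index'') at each slot and each ear is independent and leaves the drawing planar, since these choices do not interact. Finally I would reinterpret the resulting planar drawing as a split thickness two drawing of $\blowup{2}{G}$: the two $\tilde G$-copies $v',v''$ of an interior vertex $v$ of $P$ contribute four vertices after the blowup, which we split as two images of $v_0$ (those with subscript $0$) and two of $v_1$; each endpoint of $P$ keeps a single image of $v_0$ and of $v_1$. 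Every vertex of $\blowup{2}{G}$ thus gets at most two images, and (after deleting the quadrilaterals of the auxiliary triangulation diagonals) exactly the edges of $\blowup{2}{G}$ remain, each drawn once.

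What has to be checked is that each of the four blown-up copies of each edge of $G$ is drawn exactly once. For an edge of $G$ not in $P$ this is automatic: it has a single copy in $\tilde G$, and its quadrilateral is exactly its blowup $C_4$ after the reinterpretation. For an edge $uv$ of $P$, its two copies in $\tilde G$ jointly supply the $C_4$ exactly once provided they are drawn with opposite orientations; flipping a slot or ear toggles which complementary pair of the $C_4$ that copy contributes. So the requirement becomes a system of $\mathbb{Z}_2$ constraints --- one exclusive-or constraint pairing the two copies of each edge of $P$, and one equality constraint at each of the two ears pairing its two ear edges --- which I would argue is always satisfiable: the exclusive-or constraints form a perfect matching on the $2(n-1)$ boundary edges of $\tilde G$, and adjoining the at most two ear constraints keeps the constraint graph a forest (at worst it closes up a four-cycle carrying two exclusive-or edges, hence with no parity obstruction). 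I expect this bookkeeping --- coordinating the orientations of the two copies of every path edge while respecting that the two ear edges at an ear are forced to agree --- to be the main point of the argument, the planarity of the underlying strip drawing being inherited essentially verbatim from \cref{thm:outerpaths}.
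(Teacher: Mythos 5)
Your proposal is correct and essentially reproduces the paper's proof: cut $G$ open along $P$ into a single triangulated outerpath whose weak dual is $P^*$, run the nested-quadrilateral construction of \cref{thm:outerpaths} on this one strip with each boundary appearance of a vertex carrying images of both of its blowup copies, and give the two boundary appearances of each edge of $P$ complementary pairs of its four blowup copies. The only difference is that the paper simply asserts these orientation choices can be made arbitrarily and independently, whereas your $\mathbb{Z}_2$ bookkeeping verifies this point in more detail (and the ear equality constraint you impose is not actually forced, since the two ear edges of an ear can also be drawn with opposite index conventions, so the constraint system is even easier to satisfy than you claim).
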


\begin{proof}
We may triangulate the faces of $G$, if necessary, preserving the existence of a path--copath decomposition by choosing added diagonals that split each face into an outerpath. As a result, the dual path $P^*$ of the path--copath decomposition $(P,P^*)$ becomes a triangulated outerpath. Each vertex of $G$ may appear multiple times on the boundary of this outerpath, with multiplicity equal to its degree in $P$ (at most two, because $P$ is a path). The outerpath can be formed from $G$ by cutting the plane along each edge of $P$; as a result, each edge of $P$ appears exactly twice on the boundary of the outerpath.

We apply the method of \cref{thm:outerpaths} to this single outerpath, producing a drawing in which each appearance of a vertex $v$ of $G$ on the boundary of the outerpath produces images of both copies of $v$. If $v$ appears once on the boundary of the outerpath, its two copies appear once; if $v$ appears twice, its two copies appear twice, giving this drawing split thickness two. This drawing style automatically produces all four images of each edge interior to the outerpath. For an edge $uv$ of path $P$, appearing twice on the boundary of the outerpath, we choose arbitrarily which appearance of $uv$ on the boundary of the outerpath is used to draw edges $u_0v_0$ and $u_1v_1$, and which is used to draw edges $u_0v_1$ and $u_1v_0$. In this way, all four images of $uv$ are drawn correctly.
\end{proof}

\begin{figure}[t]
\centering\includegraphics[width=\textwidth]{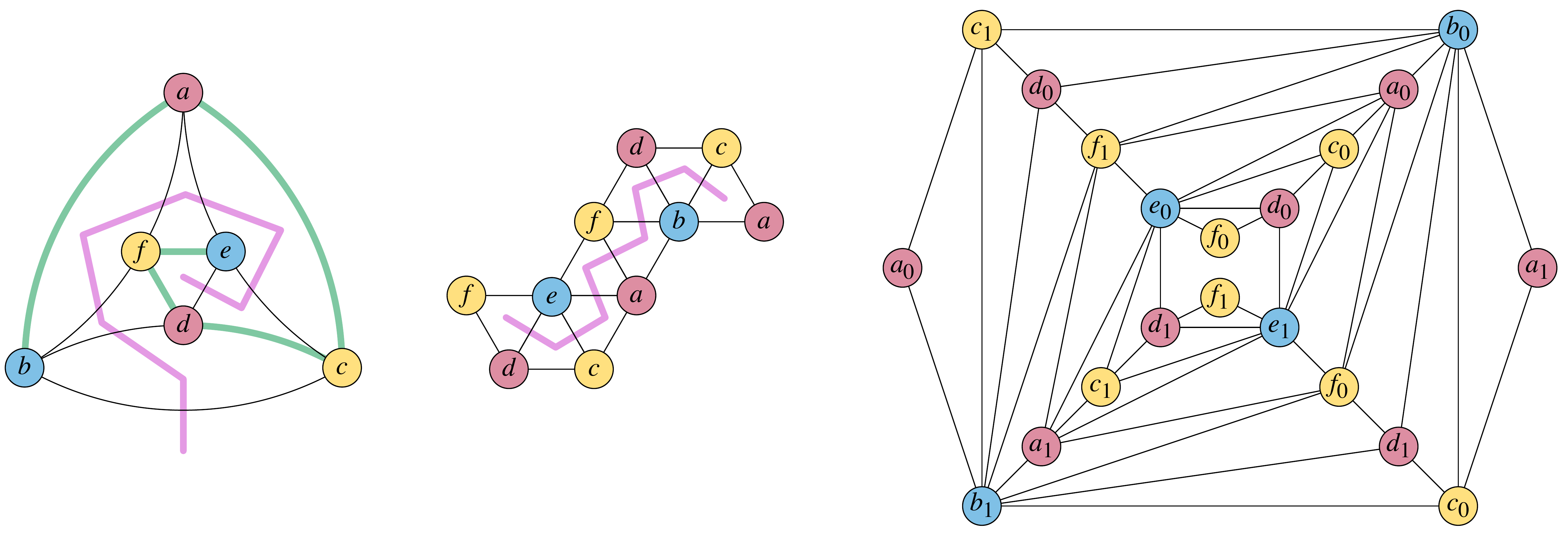}
\caption{Path--copath decomposition of the octahedral graph $K_{2,2,2}$ (left), the outerpath obtained by cutting the path (center), and the split thickness 2 drawing of $\blowup{2}{K_{2,2,2}}=K_{4,4,4}$ obtained from \cref{thm:path-copath} (right).}
\label{fig:path-copath}
\end{figure}

\cref{fig:path-copath} depicts an example.

\section{Drawings from Colorings}

We show in this section that the blowup $\blowup{2}{G}$ of a 3-colored planar graph $G$ has split thickness at most two. We do not know whether all such blowups are biplanar; our construction does not produce a biplanar drawing. More generally, we show that $\blowup{k}{G}$ has split thickness at most $k$; the result for $\blowup{2}{G}$ is a special case.

\renewcommand{\floatpagefraction}{.8}
\begin{figure}[t!]
\centering\includegraphics[width=0.9\textwidth]{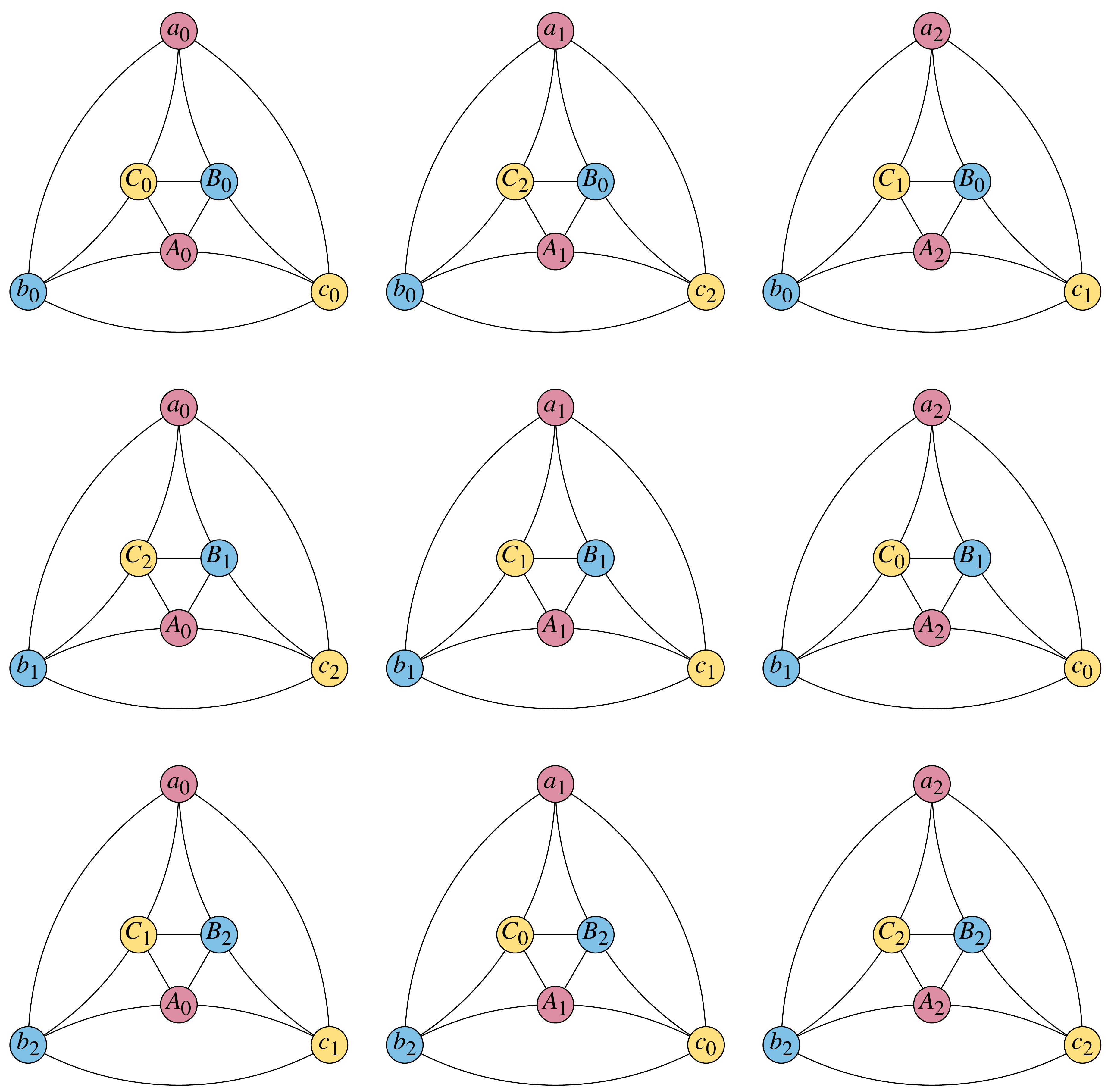}
\caption{\cref{thm:3color} applied to the graph $\blowup{3}{K_{2,2,2}}=K_{6,6,6}.$ Each vertex is labeled with a letter (its position in $K_{2,2,2}$), a number (its index as a copy in the blowup), and a color in the coloring of $K_{2,2,2}$. Each letter-number combination has three images, so this is a drawing with split thickness three. $K_{6,6,6}$ has 108 edges, but drawings of 18-vertex graphs with split thickness two can have at most 102 edges, so this drawing is optimal.}
\label{fig:triple-octahedron}
\end{figure}

\begin{theorem}
\label{thm:3color}
Let $G$ be planar and 3-chromatic; then $\blowup{k}{G}$ has a drawing with split thickness $k$.
\end{theorem}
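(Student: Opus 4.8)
The plan is to construct the split thickness $k$ drawing explicitly, using a planar embedding of $G$ for the large-scale layout and a proper $3$-coloring $c\colon V(G)\to\{0,1,2\}$ (which exists by hypothesis) for the local routing. Write $V_0,V_1,V_2$ for the three color classes and index the $k$ copies of each vertex $v$ of $G$ by $\mathbb Z_k$, writing them $v^0,\dots,v^{k-1}$. The edges of $G$ then split into the three bipartite planar graphs $G[V_0,V_1]$, $G[V_0,V_2]$, $G[V_1,V_2]$, and for each edge $uv$ of $G$ the $k^2$ edges of $\blowup{k}{G}$ joining copies of $u$ to copies of $v$ form a complete bipartite graph $K_{k,k}$, which I would decompose into $k$ perfect matchings $M_0^{uv},\dots,M_{k-1}^{uv}$, with $M_\ell^{uv}$ joining the $i$-th copy of one endpoint to the $(i+\ell)$-th copy of the other.

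First I would treat one bipartite class, say $G[V_0,V_1]$, in isolation: here a uniform ``rotation'' assignment, in which $M_\ell^{uv}$ is placed on the $\ell$-th of $k$ planar sheets for every edge $uv$, yields sheets that are genuinely planar, because the monodromy of such a covering around any face of $G[V_0,V_1]$ vanishes --- every face of a bipartite graph has even length, so each sheet is in fact a disjoint union of copies of $G[V_0,V_1]$. Doing this independently for the three bipartite classes would charge a vertex of color $a$ with $k$ images from each of the two classes that meet class $a$, giving $2k$ images; so the second and decisive step is to merge the sheets belonging to the two classes incident to each vertex, exploiting that such a vertex sees only two of the three colors among its neighbours, so that those two families of sheets share a single budget of $k$ images rather than demanding $k$ each. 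Finally I would assemble the resulting sheets into one planar picture, with one image of each copy $v^i$ per sheet it appears on, giving a drawing of $\blowup{k}{G}$ of split thickness $k$.

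The hard part is exactly this merging step: keeping each sheet planar while forcing the two color-incident families at every vertex onto the same $k$ images requires twisting the rotations away from uniform in a way that must still annihilate almost all of the covering monodromy, and controlling that monodromy is what uses $3$-colorability in an essential way --- it is the decomposition of $E(G)$ into only three bipartite, hence even-faced, subgraphs that keeps the covering sheets from wrapping up into positive-genus surfaces. This is consistent with \cref{thm:kleetope}: a graph that genuinely needs four or more colors, such as $K_4$ and hence an iterated Kleetope, admits no such three-way even-faced decomposition, and indeed its $2$-blowup fails even to be biplanar. Taking $k=2$ recovers the split thickness two drawing of the $2$-blowup of a $3$-chromatic planar graph promised in the introduction, and taking $G=K_{2,2,2}$ with $k=3$ gives the drawing of $\blowup{3}{K_{2,2,2}}=K_{6,6,6}$ of \cref{fig:triple-octahedron}, which the edge count in that figure's caption shows to be optimal; the general statement for all $k$ follows from the same construction.
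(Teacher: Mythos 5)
There is a genuine gap: the step you yourself call ``the hard part'' --- merging the per-color-class sheets so that the two bipartite classes incident to each vertex share a single budget of $k$ images, while keeping every sheet planar --- is exactly the content of the theorem, and your proposal only gestures at it (``twisting the rotations away from uniform'', ``controlling the monodromy'') without giving a construction or an argument that such a twisting exists. As written, the first step alone charges each vertex $2k$ images, so no bound better than split thickness $2k$ is actually established; nothing in the proposal pins down how the two families of sheets at a vertex get identified consistently across the whole graph. (Your side remark that the $2$-blowup of $K_4$ ``fails even to be biplanar'' is also not something the paper proves and is in fact false --- $\blowup{2}{K_4}=K_{2,2,2,2}$ is a subgraph of the biplanar graph $K_8$; the paper's counterexamples are $2$-blowups of third-order Kleetopes of large maximal planar graphs.)

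The actual proof is far simpler and needs no monodromy or covering-space considerations: fix a proper $3$-coloring with colors red, blue, yellow, and draw $\blowup{k}{G}$ as $k^2$ pairwise disjoint plane copies of $G$, one for each pair $(i,j)$ with $0\le i,j<k$, where the copy indexed $(i,j)$ uses the red copies numbered $i$, the blue copies numbered $j$, and the yellow copies numbered $-(i+j)\bmod k$. Every edge of $\blowup{k}{G}$ joins copies of two differently colored vertices, and its two indices determine the pair $(i,j)$ uniquely, so each blowup edge appears in exactly one of the $k^2$ plane copies; and each copy of a vertex appears in exactly $k$ of them (the free index ranges over $\mathbb{Z}_k$), which is precisely the split thickness $k$ bound. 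This global linear indexing is the ``merge'' you were looking for: it makes the whole drawing a disjoint union of plane copies of $G$, so planarity of each piece is immediate rather than something that has to be preserved under rewiring. If you want to salvage your sheet-by-sheet plan, you would need to exhibit such a consistent index assignment explicitly --- at which point you have rediscovered the paper's construction --- or else prove an existence statement that your proposal currently only asserts.
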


\begin{proof}
Color the vertices of $G$ red, blue, and yellow, and number the copies of each vertex in $\blowup{k}{G}$ from $0$ to $k-1$.
Draw $\blowup{k}{G}$ as $k^2$ disjoint copies of $G$, where for $(i,j)$ with $0\le i,j<k$ we draw a copy of $G$ consisting of the copies of red vertices numbered $i$, the copies of blue vertices numbered $j$, and the copies of yellow vertices numbered $-(i+j)$ mod $k$. As the disjoint union of $k^2$ planar drawings, the result is planar. Each edge of $\blowup{k}{G}$ appears in one copy of $G$, and each vertex in $\blowup{k}{G}$ has images in $k$ copies of $G$. As a planar drawing with $k$ images of each vertex, it is a drawing with split thickness $k$.
\end{proof}

\cref{fig:triple-octahedron} shows a drawing of $K_{6,6,6}$ with split thickness three, obtained by applying this construction to the triple blowup of the graph of the octahedron. We remark that, when applied to planar bipartite graphs, the same construction yields a thickness $k$ drawing of the $k$-blowup: if we group together the copies of $G$ that would have index $i$ for the vertices of the missing color, then each copy of each vertex appears once in each group. In the case $k=2$, it is also possible to find biplanar drawings of the 2-blowups of planar bipartite graphs in a different way, using the fact that these graphs have arboricity at most two.

\section{Conclusions}
We have shown that 2-blowups of iterated Kleetopes are not biplanar, but that 2-blowups of planar graphs with outerpath decompositions are biplanar. Additionally, we have shown that $2$-blowups of graphs with path--copath decompositions have split thickness at most~2, and $k$-blowups of planar graphs with chromatic number at most three have split thickness at most~$k$.

Several natural questions remain open for future research:
\begin{itemize}
\item Is it ever possible for the 2-blowup of a 3-chromatic planar graph to be non-biplanar? Is it ever possible for the 2-blowup of a 4-vertex-connected planar graph to be non-biplanar?
\item What is the computational complexity of finding biplanar drawings of 2-blowups of planar graphs? Biplanarity is NP-complete in general~\cite{Man-MPCPS-83}, but the proof does not apply to this special case.
\item What is the computational complexity of finding a two-outerpath decomposition? Partition into two induced paths is NP-complete for general graphs~\cite{LeLeMul-DAM-03}, but although its planar case is closely related to Hamiltonicity of the dual graph, we are unaware of complexity results for this case.
\item Can \cref{thm:outerpaths}, on drawing 2-blowups of graphs with a two-outerpath decomposition, be extended from thickness to geometric thickness? Geometric thickness (also called real linear thickness) is similar to thickness, but requires vertices to have the same geometric placement in each planar subgraph and requires edges to be drawn as non-crossing line segments~\cite{DilEppHir-JGAA-00,DujWoo-DCG-07,DunEppKob-SoCG-04,Epp-TTGG-04,Kai-AMSH-73}.
\end{itemize}

\section*{Acknowledgements}

This research was supported in part by NSF grant CCF-2212129.

\bibliographystyle{splncs04}
\bibliography{blowups}

\begin{thebibliography}{10}
\providecommand{\url}[1]{\texttt{#1}}
\providecommand{\urlprefix}{URL }
\providecommand{\doi}[1]{https://doi.org/#1}

\bibitem{AlbBouGet-DM-10}
Albertson, M.O., Boutin, D.L., Gethner, E.: {The thickness and chromatic number
  of $r$-inflated graphs}. Discrete Mathematics  \textbf{310}(20),
  2725{--}2734 (2010). \doi{10.1016/j.disc.2010.04.019}

\bibitem{BatHarKod-BAMS-62}
Battle, J., Harary, F., Kodama, Y.: {Every planar graph with nine points has a
  nonplanar complement}. Bulletin of the American Mathematical Society
  \textbf{68},  569{--}571 (1962). \doi{10.1090/S0002-9904-1962-10850-7}

\bibitem{DilEppHir-JGAA-00}
Dillencourt, M.B., Eppstein, D., Hirschberg, D.S.: {Geometric thickness of
  complete graphs}. J. Graph Algorithms {\&} Applications  \textbf{4}(3),
  5{--}17 (2000). \doi{10.7155/jgaa.00023}

\bibitem{DujWoo-DCG-07}
Dujmovi{\'c}, V., Wood, D.R.: {Graph treewidth and geometric thickness
  parameters}. Discrete {\&} Computational Geometry  \textbf{37}(4),
  641{--}670 (2007). \doi{10.1007/s00454-007-1318-7}

\bibitem{DunEppKob-SoCG-04}
Duncan, C.A., Eppstein, D., Kobourov, S.: {The geometric thickness of low
  degree graphs}. In: Snoeyink, J., Boissonnat, J.D. (eds.) Proceedings of the
  20th ACM Symposium on Computational Geometry, Brooklyn, New York, USA, June
  8-11, 2004. pp. 340{--}346. ACM (2004). \doi{10.1145/997817.997868}

\bibitem{Epp-SODA-03}
Eppstein, D.: {Dynamic generators of topologically embedded graphs}. In:
  Proceedings of the Fourteenth Annual ACM{--}SIAM Symposium on Discrete
  Algorithms, January 12{--}14, 2003, Baltimore, Maryland, USA. pp. 599{--}608.
  Association for Computing Machinery and Society for Industrial and Applied
  Mathematics (2003)

\bibitem{Epp-TTGG-04}
Eppstein, D.: {Separating thickness from geometric thickness}. In: Pach, J.
  (ed.) Towards a Theory of Geometric Graphs, Contemporary Mathematics,
  vol.~342, pp. 75{--}86. Amer. Math. Soc. (2004)

\bibitem{Epp-GCOM-21}
Eppstein, D.: {On polyhedral realization with isosceles triangles}. Graphs {\&}
  Combinatorics  \textbf{37}(4),  1247{--}1269 (2021).
  \doi{10.1007/s00373-021-02314-9}

\bibitem{EppKinKob-Algo-18}
Eppstein, D., Kindermann, P., Kobourov, S., Liotta, G., Lubiw, A., Maignan, A.,
  Mondal, D., Vosoughpour, H., Whitesides, S., Wismath, S.: {On the planar
  split thickness of graphs}. Algorithmica  \textbf{80}(3),  977{--}994 (2018).
  \doi{10.1007/s00453-017-0328-y}

\bibitem{Gar-SA-80}
Gardner, M.: {Mathematical Games: The coloring of unusual maps leads into
  uncharted territory}. Scientific American  \textbf{242}(2),  14{--}23
  (February 1980). \doi{10.1038/scientificamerican0280-14}

\bibitem{Get-GT2-18}
Gethner, E.: {To the Moon and beyond}. In: Gera, R., Haynes, T.W., Hedetniemi,
  S.T. (eds.) Graph Theory: Favorite Conjectures and Open Problems, II, pp.
  115{--}133. Problem Books in Mathematics, Springer International Publishing
  (2018). \doi{10.1007/978-3-319-97686-0_11}

\bibitem{Gro-WZMLU-59}
Gr{\"o}tzsch, H.: {Zur Theorie der diskreten Gebilde, VII: Ein Dreifarbensatz
  f{\"u}r dreikreisfreie Netze auf der Kugel}. Wiss. Z. Martin-Luther-U.,
  Halle-Wittenberg, Math.-Nat. Reihe  \textbf{8},  109{--}120 (1959)

\bibitem{Gru-IJM-63}
Gr{\"u}nbaum, B.: {Unambiguous polyhedral graphs}. Israel Journal of
  Mathematics  \textbf{1}(4),  235{--}238 (1963). \doi{10.1007/BF02759726}

\bibitem{Hal-IS-91}
Halton, J.H.: {On the thickness of graphs of given degree}. Information
  Sciences  \textbf{54}(3),  219{--}238 (1991).
  \doi{10.1016/0020-0255(91)90052-V}

\bibitem{Hea-QJM-90}
Heawood, P.J.: {Map colour theorem}. Quarterly Journal of Mathematics
  \textbf{24},  332{--}338 (1890)

\bibitem{Kai-AMSH-73}
Kainen, P.C.: {Thickness and coarseness of graphs}. Abhandlungen aus dem
  Mathematischen Seminar der Universit{\"a}t Hamburg  \textbf{39},  88{--}95
  (1973). \doi{10.1007/BF02992822}

\bibitem{Kot-MFC-55}
Kotzig, A.: {Contribution to the theory of Eulerian polyhedra}.
  Matematicko-Fyzik{\'a}lny {\v{C}}asopis  \textbf{5},  101{--}113 (1955)

\bibitem{LeLeMul-DAM-03}
Le, H.O., Le, V.B., M{\"u}ller, H.: {Splitting a graph into disjoint induced
  paths or cycles}. Discrete Applied Mathematics  \textbf{131}(1),  199{--}212
  (2003). \doi{10.1016/S0166-218X(02)00425-0}

\bibitem{Mac-DMJ-37}
Mac~Lane, S.: {A structural characterization of planar combinatorial graphs}.
  Duke Mathematical Journal  \textbf{3}(3),  460{--}472 (1937).
  \doi{10.1215/S0012-7094-37-00336-3}

\bibitem{Man-MPCPS-83}
Mansfield, A.: {Determining the thickness of graphs is NP-hard}. Mathematical
  Proceedings of the Cambridge Philosophical Society  \textbf{93}(1),  9{--}23
  (1983). \doi{10.1017/S030500410006028X}

\bibitem{Nas-JLMS-61}
Nash-Williams, C.S.J.A.: {Edge-disjoint spanning trees of finite graphs}.
  Journal of the London Mathematical Society  \textbf{36},  445{--}450 (1961).
  \doi{10.1112/jlms/s1-36.1.445}

\bibitem{Rin-59}
Ringel, G.: {F{\"a}rbungsprobleme auf Fl{\"a}chen und Graphen}, Mathematische
  Monographien, vol.~2. VEB Deutscher Verlag der Wissenschaften, Berlin (1959)

\bibitem{Ste-EMW-22}
Steinitz, E.: {Polyeder und Raumeinteilungen}. In: Encyclop{\"a}die der
  mathematischen Wissenschaften, vol. IIIAB12, pp. 1{--}139 (1922)

\bibitem{SykSzeVrt-IS-04}
S{\'y}kora, O., Sz{\'e}kely, L.A., Vrt'o, I.: {A note on Halton's conjecture}.
  Information Sciences  \textbf{164}(1-4),  61{--}64 (2004).
  \doi{10.1016/j.ins.2003.06.008}

\bibitem{Tut-PLMS-63}
Tutte, W.T.: {How to draw a graph}. Proceedings of the London Mathematical
  Society (Third Series)  \textbf{13},  743{--}767 (1963).
  \doi{10.1112/plms/s3-13.1.743}

\bibitem{Tut-CMB-63}
Tutte, W.T.: {The non-biplanar character of the complete 9-graph}. Canadian
  Mathematical Bulletin  \textbf{6},  319{--}330 (1963).
  \doi{10.4153/CMB-1963-026-x}

\end{thebibliography}

\end{document}